\newcommand{\bbZ}{\mathbb{Z}}
\newcommand{\bbR}{\mathbb{R}}
\newcommand{\rmp}{\mathrm{p}}
\newcommand{\rmE}{\mathrm{E}}
\newcommand{\rmP}{\mathrm{P}}
\newcommand{\rmT}{\mathrm{T}}
\newcommand{\calR}{\mathcal{R}}
\newcommand{\calW}{\mathcal{W}}
\newcommand{\calX}{\mathcal{X}}
\newcommand{\calY}{\mathcal{Y}}
\newcommand{\LH}{\mathrm{LH}}
\newcommand{\occ}[1]{\mathrm{occ}_{#1}}
\newcommand{\bigparen}[1]{\bigl({#1}\bigr)}
\newcommand{\Bigparen}[1]{\Bigl({#1}\Bigr)}
\newcommand{\biggparen}[1]{\biggl({#1}\biggr)}
\newcommand{\Biggparen}[1]{\Biggl({#1}\Biggr)}
\newcommand{\Biggbracket}[1]{\Biggl[{#1}\Biggr]}
\newcommand{\set}[1]{\{{#1}\}}
\newcommand{\abs}[1]{|{#1}|}
\newcommand{\bigabs}[1]{\bigl|{#1}\bigr|}
\newcommand{\biggabs}[1]{\biggl|{#1}\biggr|}
\newcommand{\Biggabs}[1]{\Biggl|{#1}\Biggr|}
\newcommand{\norm}[1]{\|{#1}\|}
\newcommand{\Bignorm}[1]{\Bigl\|{#1}\Bigr\|}
\newcommand{\ip}[2]{\langle{#1},{#2}\rangle}
\newcommand{\conv}[2]{{#1}*{#2}}
\newcommand{\sconv}[2]{{#1}\star{#2}}
\newcommand{\alphi}{\renewcommand{\labelenumi}{(\alph{enumi})}}
\newtheorem{thm}{Theorem}
\newtheorem{prop}[thm]{Proposition}
\theoremstyle{definition}
\begin{document}
\begin{frontmatter}
\title{Local histograms and image occlusion models}

\author[AFIT]{Melody L. Massar}
\author[CMUECE]{Ramamurthy Bhagavatula}
\author[AFIT]{Matthew Fickus}
\ead{Matthew.Fickus@afit.edu}
\author[CMUECE,CMUBME]{Jelena Kova\v{c}evi\'{c}}

\address[AFIT]{Department of Mathematics and Statistics, Air Force Institute of Technology, Wright-Patterson Air Force Base, OH 45433, USA}
\address[CMUECE]{Department of Electrical and Computer Engineering, Carnegie Mellon University, Pittsburgh, PA 15213, USA}
\address[CMUBME]{Department of Biomedical Engineering, Carnegie Mellon University, Pittsburgh, PA 15213, USA}

\begin{abstract}
The local histogram transform of an image is a data cube that consists of the histograms of the pixel values that lie within a fixed neighborhood of any given pixel location.  Such transforms are useful in image processing applications such as classification and segmentation, especially when dealing with textures that can be distinguished by the distributions of their pixel intensities and colors.  We, in particular, use them to identify and delineate biological tissues found in histology images obtained via digital microscopy.  In this paper, we introduce a mathematical formalism that rigorously justifies the use of local histograms for such purposes.  We begin by discussing how local histograms can be computed as systems of convolutions.  We then introduce probabilistic image models that can emulate textures one routinely encounters in histology images.  These models are rooted in the concept of image occlusion.  A simple model may, for example, generate textures by randomly speckling opaque blobs of one color on top of blobs of another.  Under certain conditions, we show that, on average, the local histograms of such model-generated-textures are convex combinations of more basic distributions.  We further provide several methods for creating models that meet these conditions; the textures generated by some of these models resemble those found in histology images.  Taken together, these results suggest that histology textures can be analyzed by decomposing their local histograms into more basic components.  We conclude with a proof-of-concept segmentation-and-classification algorithm based on these ideas, supported by numerical experimentation.

\end{abstract}

\begin{keyword}
local histogram \sep occlusion \sep texture \sep classification \sep segmentation 
\end{keyword}
\end{frontmatter}


\section{Introduction}

A \textit{local histogram} of an image is a histogram of the values of the pixels that lie in a neighborhood of a given pixel's location.  It indicates the particular combination of pixel intensities or colors that appear in that neighborhood.  When used as features in an image classification scheme, such histograms can help distinguish one texture from another.  We, in particular, use them in automated segmentation-and-classification algorithms for digital microscope images of biological tissues.

To be precise, the work presented here was motivated by the need to identify and delineate the various tissues exhibited in images of histological sections of teratoma tumors derived from embryonic stem cells, such as the one given in Figure~\ref{figure.Histology}(a).  This image was provided by Dr.~Carlos Castro of the University of Pittsburgh and Dr.~John A.~Ozolek of the Children's Hospital of Pittsburgh, who grow and image such teratomas to gain greater insight into tissue development.  In this image, which is purple-pink from hematoxylin and eosin (H\&E) staining, even a layman can discern several distinct textures, each corresponding to a distinct tissue type.  For each image under study, Drs. Castro and Ozolek make use of their years of medical training and experience to identify what tissues are present, and to what degree.  Moreover, when provided with a point-and-click interface, they can manually segment the image according to tissue type, resulting in per-pixel labels such as those given in Figure~\ref{figure.Histology}(b).  Though straightforward for medical experts, such tasks are nevertheless tedious and time-consuming, leading to inconsistencies when working with large data sets.  It is therefore our goal to automate as much of this process as is possible.  Our current algorithm is given in~\cite{Bhagavatula10Nov} and builds upon previous work given in~\cite{Bhagavatula10,Chebira08,Massar10}.
\begin{figure}
\begin{center}
\subfigure[]
{\includegraphics[width=.45\textwidth]{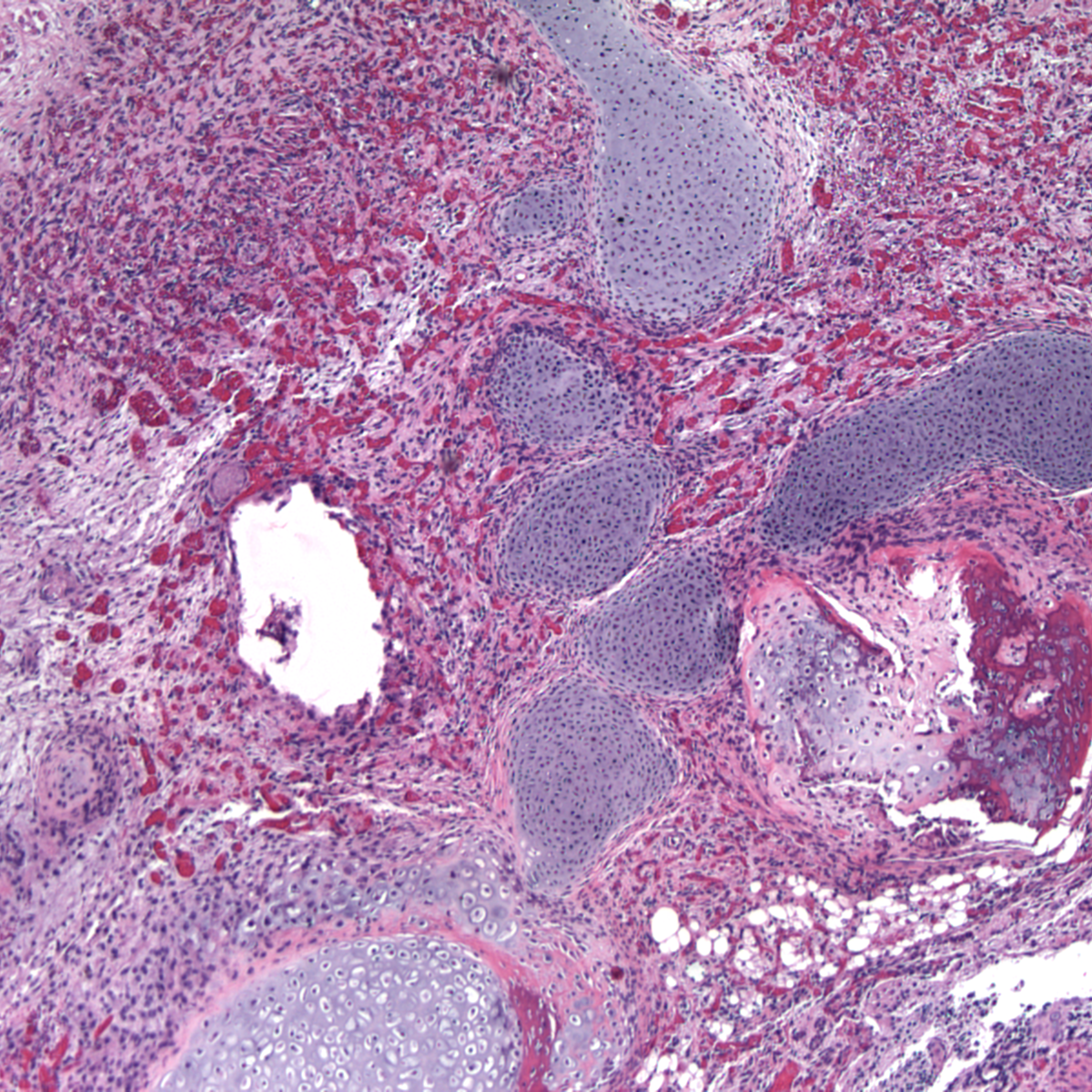}}
\hspace{.4in}
\subfigure[]
{\includegraphics[width=.45\textwidth]{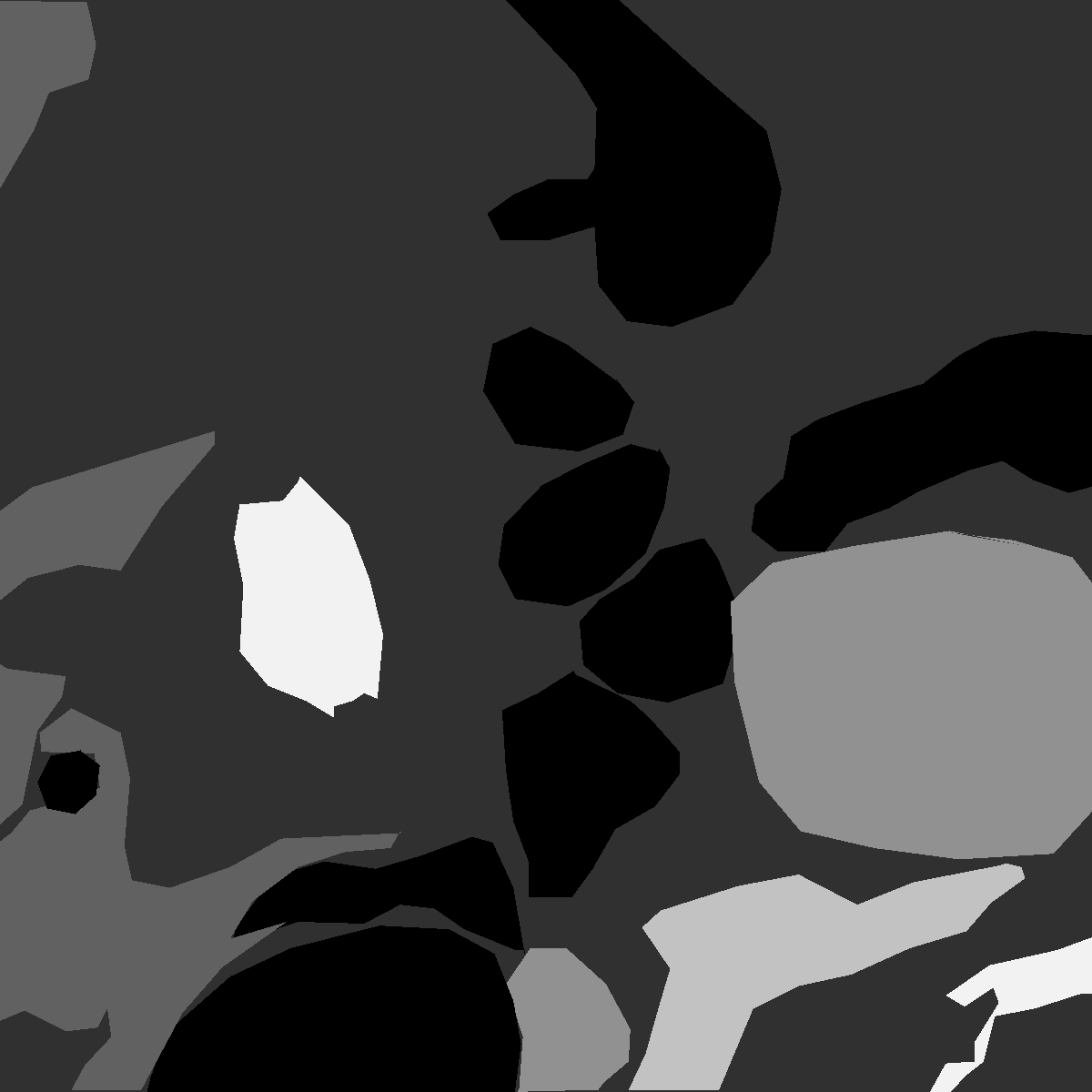}}
\caption{\label{figure.Histology} (a) A digital microscope image of a H\&E-stained tissue section.  (b) The histology image has been manually segmented and classified by a medical expert, resulting in the per-pixel labels.  From darkest to lightest, the labels indicate cartilage, pseudovascular tissue, connective tissue, bone, fatty tissue, and background pixels, respectively.  Our goal is to automate this segmentation-and-classification process.  The purpose of this paper is to provide a theoretical justification for using local histograms to achieve this goal.}
\end{center}
\end{figure}

Our use of local histograms was motivated by the unique image features found in histology images of teratomas derived from primate embryonic stem cells.  In layman's terms, these tumors begin as masses of undifferentiated cells that are implanted in laboratory animals.  Over time, these tumors grow and their cells differentiate into many various types---bone, cartilage, skin, etc.---until a point at which they are excised, sectioned, stained and viewed under a microscope, resulting in images such as the one in Figure~\ref{figure.Histology}(a).  As such, these images exhibit a wide variety of tissue types, arranged in a seemingly random fashion.  Indeed, to a casual observer such images can appear as a jumbled mess.  In truth however, the arrangement of these tissues is not completely random, and is rather the result of not yet well-understood biological mechanisms.  Drs.~Castro and Ozolek believe that by looking at many such images---many sections of many teratomas---they can gain greater insight into these mechanisms.  Here, spatial context is crucial: one must identify which particular tissue is present at any given point in order to estimate the total amount of each type, as well as the degree to which any given type is adjacent to other types.

In light of these facts, we seek an algorithm which assigns a tissue label to each pixel, thereby segmenting (delineating) and classifying (identifying) the image at the same time. Indeed, such an algorithm would be useful in a broad class of digital pathology applications beyond the teratoma problem~\cite{Bhagavatula10Nov}.  While designing such an algorithm, we must keep in mind that often no single pixel contains enough information to uniquely determine a label.  Rather, the decisions will be made based on features computed over some fixed neighborhood of every given pixel location.   To determine which specific features to use, it helps to have a closer look at each individual tissue.  For example, for the $1200\times1200$ image given in Figure~\ref{figure.Histology}(a) and thumbnailed in Figure~\ref{figure.histograms}(a), we zoom in on three tissue types---cartilage, connective tissue and pseudovascular tissue---resulting in the $128\times128$ subimages given in Figure~\ref{figure.histograms}(b), (c) and (d), respectively.  Each of these three tissue types exhibits a unique aperiodic texture.  For instance, the cartilage texture can be regarded as a light purple field speckled with darker reddish-purple blobs; each blob represents an individual cell's nucleus.  Meanwhile, connective tissue appears as dark purple blobs over a light pink field; pseudovascular tissue is similar to connective tissue, but contains additional reddish-pink structures.  In particular, these three textures exhibit distinct distributions of color, a fact which can quantitatively be confirmed by computing the two-dimensional histograms of their red-blue (RB) pixel value pairs, as depicted in Figure~\ref{figure.histograms}(f), (g) and (h).  
\begin{figure}[t]
\begin{center}
\subfigure[Histology image]
{\includegraphics[width=.24\textwidth]{1a.png}}
\hfill
\subfigure[Cartilage]
{\includegraphics[width=.24\textwidth]{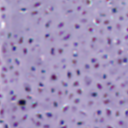}}
\hfill
\subfigure[Connective tissue]
{\includegraphics[width=.24\textwidth]{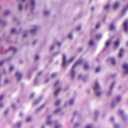}}
\hfill
\subfigure[Pseudovascular tissue]
{\includegraphics[width=.24\textwidth]{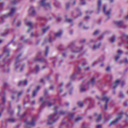}}
\\
\subfigure[The RB histogram of (a).]
{\includegraphics[width=.24\textwidth]{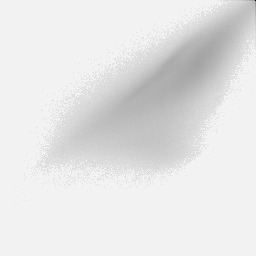}}
\hfill
\subfigure[The RB histogram of (b).]
{\includegraphics[width=.24\textwidth]{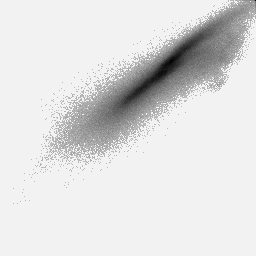}}
\hfill
\subfigure[The RB histogram of (c).]
{\includegraphics[width=.24\textwidth]{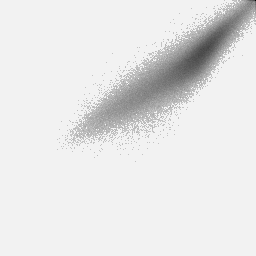}}
\hfill
\subfigure[The RB histogram of (d).]
{\includegraphics[width=.24\textwidth]{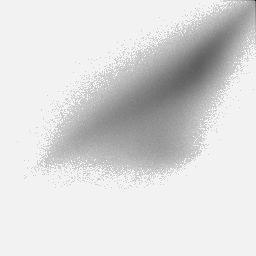}}
\end{center}
\caption{\label{figure.histograms} A $1200\times1200$ histology image exhibiting multiple tissue types (a) and the $256\times256$ histogram of its red-blue (RB) pixel values (e).  As is common with H\&E staining, the tissues in (a) are purple-pink and so we ignore the green component of these red-green-blue (RGB) images when computing (e).  This histogram is viewed from above, with red and blue ranging from $0$ to $255$ on the horizontal and vertical axes, respectively; here the height of the histogram is proportional to darkness for the sake of readability.  In (b), (c) and (d) we zoom in on three $128\times128$ patches extracted from (a), each of which exhibit a single tissue type, namely cartilage, connective tissue and pseudovascular tissue, respectively.  Each of these three tissue types has a distinct distribution of pixel values, as evidenced by their corresponding RB histograms (f), (g) and (h).  These $256\times256$ histograms are similar to (e), but are only computed over those points of a given type according to the ground truth labels in Figure~\ref{figure.Histology}(b).  In particular, the histogram (f) of cartilage (b) is computed over all points labeled in black in Figure~\ref{figure.Histology}(b).  We see that cartilage is darker, on average, than connective: (f) is distributed more towards the lower left-hand side than (g) is.  Moreover, pseudovascular is similar to connective, but possesses additional reddish-pink structures, as evidenced by the subdiagonal blob found in (h), but not (g).  As such, it is plausible that local histograms can serve as discriminating features in segmentation-and-classification algorithms.}
\end{figure}

As certain tissues can be distinguished from others based solely on the distributions of their pixel values, we propose to use histograms as image features in a segmentation-and-classification scheme.  These histograms must be computed locally---over a fixed neighborhood of every pixel location---since global histograms, such as the one depicted in Figure~\ref{figure.histograms}(e) derived from Figure~\ref{figure.histograms}(a), destroy spatial context by mixing all of the individual distributions together.  A similar issue arises in time-frequency analysis: spectrograms preserve spatial context while Fourier transforms do not.  Indeed, local histograms are philosophically similar to spectrograms: in a neighborhood of a given point, the local histogram transform estimates the \textit{frequency of occurrence} of a given value while the spectrogram estimates frequency in the traditional sense.

The purpose of this paper is to provide a mathematically rigorous justification for the use of local histograms in this fashion.  To be precise, we regard our images as functions from a finite abelian group $\calX$ of pixel locations into a second finite abelian group $\calY$ of pixel values.  That is, our images $ f$ are members of the set $\ell(\calX,\calY):=\set{ f:\calX\rightarrow\calY}$.  For example, the $1200\times1200$, $8$-bit red-green-blue (RGB) image given in Figure~\ref{figure.Histology}(a) has $\calX=\bbZ_{1200}^2=\bbZ_{1200}\times\bbZ_{1200}$ and $\calY=\bbZ_{256}^3$, where $\bbZ_N$ denotes the cyclic group of integers modulo $N$.  For purple-pink H\&E-stained images, we often omit the green channel for the sake of computational efficiency, at which point $\calY$ becomes $\bbZ_{256}^2$.  The local histograms of an image $ f$ are defined in terms of a \textit{weighting function}, that is, a nonnegatively-valued $ w\in\ell(\calX,\bbR)$ whose values sum to one.  Specifically, the \textit{local histogram transform} of $ f$ with respect to $ w$ is the function $\LH_ w  f:\calX\times\calY\rightarrow\bbR$,
\begin{equation}
\label{equation.definition of local histogram}
(\LH_ w f)(x,y)
:=\sum_{x'\in\calX} w(x')\delta_y( f(x+x')),
\end{equation}
where $\delta_y(f(x+x'))=1$ if $f(x+x')=y$ and is otherwise zero.  For any fixed $x\in\calX$, the corresponding cross-section of this function, namely $(\LH_ w  f)(x,\cdot):\calY\rightarrow\bbR$, counts the number of instances at which $ f$ obtains a given value $y$ in a $ w$-neighborhood of $x$.

In this paper, we show that local histogram transforms~\eqref{equation.definition of local histogram} are well-suited to the analysis of a particular class of textures.  In short, we want a rigorous explanation of the following hypothesis: say for the sake of argument that 80\% of the cartilage texture in Figure~\ref{figure.histograms}(b) consists of ``background" light purple pixels while the remaining 20\% of pixels lie in a ``foreground" of darker-reddish purple blobs; we then expect a local histogram computed over a portion of cartilage to be a mixture---convex combination---of $0.8$ of the background pixels' distribution with $0.2$ of the foreground pixels' distribution.   Other tissues arise from other distinct decompositions.  For example, looking at the pseudovascular tissue of Figure~\ref{figure.histograms}(d), we might guess it to be $0.5$ light pink, $0.25$ dark purple and $0.25$ reddish-pink.  We rigorously show that such decompositions of local histograms indeed exist for textures arising from a certain class of probabilistic image models; our long-term goal is to exploit this fact in a segmentation-and-classification algorithm.

To see how to formalize these ideas, it helps to consider a toy example: imagine that at any given pixel location, a coin is flipped, with ``heads" resulting in a pink pixel value, and ``tails" resulting in a purple one.  One expects that, on average, the local histogram at any point will consist of two peaks: one in the pink portion of $\calY$, and one in the purple.  Such an image can be regarded as the result of \textit{occluding} a solid purple image $ f_0$ with a solid pink one $ f_1$: at each pixel, the flip of a coin determines whether $ f_1$ lies on top of $ f_0$ at that point, or vice versa.  More generally, the \textit{occlusion} of a set of $N$ images $\set{ f_n}_{n=0}^{N-1}$ in $\ell(\calX,\calY)$ with respect to a given \textit{label function} $\varphi\in\ell(\calX,\bbZ_N)$ is:
\begin{equation}
\label{equation.definition of occlusion}
\bigparen{\occ{\varphi}\set{ f_n}_{n=0}^{N-1}}(x)
:= f_{\varphi(x)}(x).
\end{equation}
That is, at any pixel location $x$, the label $\varphi(x)$ determines which of the potential pixel values $\set{ f_n(x)}_{n=0}^{N-1}$ actually appears in the composite image $\occ{\varphi}\set{ f_n}_{n=0}^{N-1}$ at that point.

The main results of this paper are concerned with when the local histograms~\eqref{equation.definition of local histogram} of a composite image~\eqref{equation.definition of occlusion} are related to the local histograms of the individual $ f_n$'s.  Though it is unrealistic to expect a clean relation for any fixed $\varphi$, we can show that these quantities are indeed closely related, provided one averages over all possible label functions $\varphi$.  Indeed, denoting the probability of getting ``heads" in the above toy example as $\rho\in[0,1]$, we would expect the volumes of the pink and purple peaks of the composite image's local histograms to be $\rho$ and $1-\rho$, respectively.  That is, $\LH_ w\occ{\varphi}\set{ f_0, f_1}$ should be $(1-\rho)\LH_ w  f_0+\rho\LH_ w  f_1$, on average.  We generalize this idea so as to permit more realistic textures with more colors and with spatially-correlated pixels. 

To be precise, fix a set of source images $\set{ f_n}_{n=0}^{N-1}$ and consider the set $\set{\occ{\varphi}\set{ f_n}_{n=0}^{N-1}}_{\varphi\in\ell(\calX,\bbZ_N)}$ of all possible composite images~\eqref{equation.definition of occlusion} obtained by letting $\varphi$ be any one of the $N^{\abs{\calX}}$ elements of $\ell(\calX,\bbZ_N)$, where $\abs{\calX}$ denotes the cardinality of $\calX$.  We refer to a random method for choosing one of these composites as an \textit{occlusion model} $\Phi$.  Formally speaking, $\Phi$ is a random variable version of $\varphi$, meaning there exists a probability density function $\rmP_\Phi:\ell(\calX,\bbZ_N)\rightarrow[0,1]$ such that $\sum_{\varphi\in\ell(\calX,\bbZ_N)}\rmP_\Phi(\varphi)=1$.  For example, imagine three $128\times128$ images $ f_0$, $ f_1$ and $ f_2$ which exhibit a nearly constant shade of pink, purple and red, respectively.  Given any label function $\varphi:\bbZ_{128}^2\rightarrow\bbZ_3$ we can produce a corresponding $128\times 128$ composite image $\occ{\varphi}\set{ f_0, f_1, f_2}$ whose pixels are some mixture of pink, purple and red.  For some choices of $\varphi$ the resulting composites will look like the pseudovascular tissue texture given in Figure~\ref{figure.histograms}(d).  However, even in this small example, there are an enormous number of such possible composites---one for each of the \smash{$3^{128^2}$} possibilities for $\varphi$---and only a few of these will look like pseudovascular tissue; most will appear as pink-purple-red static.  The role of the occlusion model $\Phi$ is to assign a probability to each of these possible $\varphi$'s in a manner that emphasizes those textures one expects to appear in a given tissue while de-emphasizing the rest.

In this paper, we provide a sufficient hypothesis on the occlusion model $\Phi$ so as to ensure that the local histograms~\eqref{equation.definition of local histogram} of a composite image~\eqref{equation.definition of occlusion} can, on average with respect to $\rmP_\Phi$, be decomposed in terms of the local histograms of the individual images.  In particular, we focus on the special case where the occlusion model $\Phi$ is \textit{flat}, meaning that on average, the probability that $\Phi$ chooses label $n$ at a given pixel location $x$ is equal to the probability of choosing $n$ at any other $x'$; formally, $\Phi$ is \textit{flat} if there exists scalars $\set{\lambda_n}_{n=0}^{N-1}$ such that:
\begin{equation}
\label{equation.definition of flatness}
\sum_{\substack{\varphi\in\ell(\calX,\bbZ_N)\\\varphi(x)=n}}\rmP_\Phi(\varphi)=\lambda_n,\quad \forall x\in\calX.
\end{equation}
That is, $\Phi$ is flat if the marginal distributions obtained by fixing any given $x\in\calX$ are identical.  Note that for any fixed $x\in\calX$, summing~\eqref{equation.definition of flatness} over all $n$ yields that $\sum_{n=1}^{N}\lambda_n=1$.  Indeed, at any given pixel location $x$, the value $\lambda_n$ represents the probability that the random label function $\Phi$ will have label $n$ at that $x$.  In our toy example where the values of $\varphi$ are determined by $\abs{\calX}$ spatially-independent coin flips, the probability of getting any particular $\varphi\in\ell(\calX,\bbZ_2)$ is $\rmP_\Phi(\varphi)=\rho^{\abs{\varphi^{-1}\set{1}}}(1-\rho)^{\abs{\calX}-\abs{\varphi^{-1}\set{1}}}$; substituting this expression into \eqref{equation.definition of flatness}, the binomial theorem implies that this model is indeed flat with $\lambda_0=1-\rho$ and $\lambda_1=\rho$.  Note that, if $\rho>\frac12$, the resulting random image $\occ{\Phi}\set{ f_0, f_1}$ will be more pink than purple; flatness does not mean that each label is equally likely, but rather that the chance of being pink at any given pixel location is the same as at any other location.  These concepts in hand, we present one of our main results, which formally claims that, on average, the local histograms of composite images produced from flat occlusion models are but mixtures of the local histograms of the source images:
\begin{thm}
\label{theorem.flatness}
If $\Phi$ is flat as in~\eqref{equation.definition of flatness}, then the expected value of the local histogram transform~\eqref{equation.definition of local histogram} of a composite image~\eqref{equation.definition of occlusion} is a convex combination of the local histograms of each individual image:
\begin{equation}
\label{equation.theorem on flatness 1}
\sum_{\varphi\in\ell(\calX,\bbZ_N)}\rmP_\Phi(\varphi)(\LH_ w\occ{\varphi}\set{ f_n}_{n=0}^{N-1})(x,y)
=\sum_{n=0}^{N-1}\lambda_n(\LH_ w f_n)(x,y).
\end{equation}
\end{thm}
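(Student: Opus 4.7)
The plan is a direct calculation: expand the left-hand side of \eqref{equation.theorem on flatness 1} using the definitions~\eqref{equation.definition of local histogram} and \eqref{equation.definition of occlusion}, swap the order of summation so that the sum over $\varphi$ is innermost, partition it according to the value that $\varphi$ takes at a single point, and then invoke the flatness hypothesis~\eqref{equation.definition of flatness} at that point.

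More concretely, I would first note that combining \eqref{equation.definition of local histogram} and \eqref{equation.definition of occlusion} gives
\begin{equation*}
(\LH_w\occ{\varphi}\set{f_n}_{n=0}^{N-1})(x,y)
=\sum_{x'\in\calX}w(x')\,\delta_y\bigparen{f_{\varphi(x+x')}(x+x')}.
\end{equation*}
Multiplying by $\rmP_\Phi(\varphi)$ and summing over $\varphi\in\ell(\calX,\bbZ_N)$, I would then exchange the (finite) $x'$-sum with the $\varphi$-sum, so that the left-hand side of~\eqref{equation.theorem on flatness 1} becomes
\begin{equation*}
\sum_{x'\in\calX}w(x')\sum_{\varphi\in\ell(\calX,\bbZ_N)}\rmP_\Phi(\varphi)\,\delta_y\bigparen{f_{\varphi(x+x')}(x+x')}.
\end{equation*}

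The key observation is that, for fixed $x'$, the summand depends on $\varphi$ only through its value at the single point $x+x'$. I would therefore partition the sum over $\varphi$ according to $n:=\varphi(x+x')\in\bbZ_N$, giving
\begin{equation*}
\sum_{\varphi}\rmP_\Phi(\varphi)\,\delta_y\bigparen{f_{\varphi(x+x')}(x+x')}
=\sum_{n=0}^{N-1}\delta_y\bigparen{f_n(x+x')}\sum_{\substack{\varphi\in\ell(\calX,\bbZ_N)\\\varphi(x+x')=n}}\rmP_\Phi(\varphi).
\end{equation*}
Applying flatness~\eqref{equation.definition of flatness} at the point $x+x'$ collapses the inner sum to $\lambda_n$, which is crucially independent of $x+x'$. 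Substituting back and interchanging the $x'$ and $n$ sums yields $\sum_{n=0}^{N-1}\lambda_n\sum_{x'\in\calX}w(x')\,\delta_y(f_n(x+x'))=\sum_{n=0}^{N-1}\lambda_n(\LH_w f_n)(x,y)$, which is the right-hand side of \eqref{equation.theorem on flatness 1}.

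There is no substantial obstacle: all sums are finite so no convergence issues arise, and the calculation is essentially a bookkeeping exercise. The only conceptual subtlety worth stating carefully is that flatness is precisely the property that makes the inner sum independent of the spatial location $x+x'$, which is exactly what is needed to pull $\lambda_n$ outside the $x'$-sum and recover a clean convex combination of local histograms. The fact that $\sum_n \lambda_n = 1$ (noted in the paragraph preceding the theorem) then confirms this is indeed a convex combination, as claimed.
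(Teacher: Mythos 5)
Your proposal is correct and follows essentially the same computation as the paper: expanding via \eqref{equation.definition of local histogram} and \eqref{equation.definition of occlusion}, interchanging sums, and partitioning over the value $n=\varphi(x+x')$ so that flatness collapses the inner sum to $\lambda_n$. The only cosmetic difference is that the paper first proves a general version (Theorem~\ref{theorem.expected value of lh of occ}) with an explicit error term for non-flat models and then specializes to $\varepsilon=0$, whereas you prove the flat case directly; the underlying manipulation is identical.
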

\noindent From the point of view of our motivating application, the significance of Theorem~\ref{theorem.flatness} is that it gives credence to a certain type of segmentation-and-classification algorithm.  To be precise, given a set of training images which are manually segmented and labeled by medical experts, we, for any given tissue type, can compute local histograms over regions which are labeled as that type.  In light of Theorem~\ref{theorem.flatness}, it is reasonable to \textit{demix}---decompose into convex combinations---the local histograms of that type into a set of more basic distributions.  For example, we expect that the local histograms of pseudovascular tissue (Figure~\ref{figure.histograms}(d)) can be demixed into three simpler distributions---one pink, another purple and a third reddish-pink---while those of connective tissue (Figure~\ref{figure.histograms}(c)) are mixtures of only the first two.  Once sparse demixings of each tissue type have been found, we then use them to segment and classify: given a new image,  we assign a label at any given point by determining which particular set of learned distributions its local histogram is most consistent with.  

The remainder of our main results are in support of this interpretation of Theorem~\ref{theorem.flatness}.  Specifically, the next section contains several basic results on local histograms.  In Section~\ref{Section.Occlusion}, we prove Theorem~\ref{theorem.flatness} and also a generalization of it---Theorem~\ref{theorem.expected value of lh of occ}---to the non-flat case.  In Section~\ref{Section.Flatness}, we provide various methods---Theorems~\ref{theorem.PJ},~\ref{theorem.star product} and~\ref{theorem.overlay}---for constructing flat $\Phi$'s, and some of these produce textures that resemble those found in digital microscope images of histological tissues.  The final section discusses a preliminary segmentation-and-classification algorithm inspired by Theorem~\ref{theorem.flatness} in which local histograms are decomposed using principal component analysis (PCA).

Both local histograms and probabilistic image occlusion models have long been subjects of interest.  Theorem~\ref{theorem.PropertiesOfLHs} below details how local histograms can be computed as systems of convolutions; a similar result is given in~\cite{Kober93}, and both \cite{Kober93} and \cite{Szoplik96} discuss how such a computation can be implemented in optical hardware.  Recently, local histograms have been used in an active contour-based segmentation scheme~\cite{Ni09}; this algorithm partitions an image into two smoothly bounded regions whose pixel values are maximally separated with respect to the Wasserstein (earth mover's) distance.  Local histograms have also recently been used as smoothing filters~\cite{Kass10}.  Though the work we present here focuses exclusively on local histograms of the pixel values themselves, an alternative approach is to first pass the image through a filter bank and then compute histograms of the resulting values~\cite{Hadjidemetriou04,Liu06}.  Local histograms, like time-frequency transforms, preserve global spatial context while obscuring all local spatial context, and as such they are well-suited to the processing of \textit{locally orderless} images~\cite{Ginneken00,Koenderink99,Koenderink00}.  We use local histograms to analyze a class of textures generated by a certain probabilistic occlusion model; this model, like the~\textit{dead leaves model}~\cite{Bordenave06,Lee01,Mumford00}, generates these textures via a sequential superposition of random sets.  Our contribution to this body of literature is a formalism that unifies the theory of local histograms with that of occlusion models and permits us to rigorously prove that local histograms are indeed a useful transform for the analysis of a particular class of textures.


\section{Local histograms}
\label{Section.Local Histograms}
In this section, we discuss an efficient means of computing local histograms~\eqref{equation.definition of local histogram} and discuss several of their basic properties.  Computing local histograms can be time consuming, especially as $\calX$ and $\calY$ become large.  In particular, for a general window $ w$, a direct computation of \eqref{equation.definition of local histogram} requires $\mathcal{O}(\abs{\calX}^2\abs{\calY})$ operations: $\mathcal{O}(\abs{\calX})$ operations for each $x\in\calX$ and $y\in\calY$.  A more efficient method is given in Theorem~\ref{theorem.PropertiesOfLHs} below: \eqref{equation.definition of local histogram} can be computed as a system of $\abs{\calY}$ convolutions over $\calX$, which only requires $\mathcal{O}(\abs{\calX}\abs{\calY}\log\abs{\calX})$ operations if discrete Fourier transforms are used.  In particular, we filter the \textit{characteristic function} of the graph of $ f$, namely $1_ f:\calX\times\calY\rightarrow\bbR$,
\begin{equation}
\label{equation.CharFunction}
1_ f(x,y)
:=1_{ f^{-1}{\left\{y\right\}}}(x)
=\delta_y( f(x))
=\left\{\begin{array}{ll}1,& f(x)=y,\\0,& f(x)\neq y,\end{array}\right.
\end{equation}
with the \textit{reversal} of $ w\in\ell(\calX,\bbR)$, namely $\tilde w(x):= w(-x)$.  This method for computing local histograms is illustrated in Figure~\ref{figure.graph}.
\begin{figure}
\begin{center}
\includegraphics{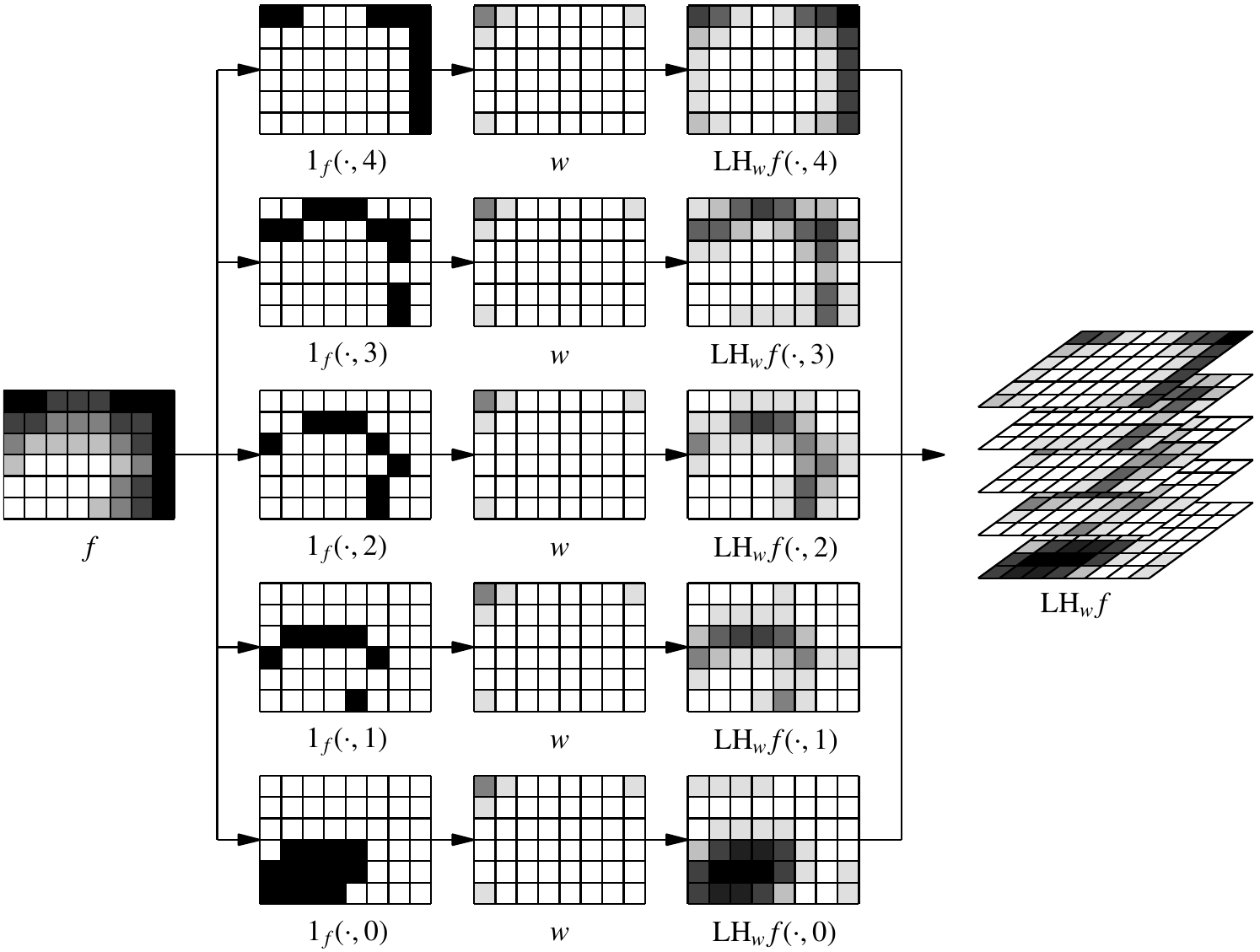}
\caption{\label{figure.graph}
An example of how to compute local histograms using Theorem~\ref{theorem.PropertiesOfLHs}(a).  For the sake of readability, larger numerical values are represented by darker shades throughout.  The source image (far left) $ f$ is $6\times 8$ and has grayscale values ranging from $0$ to $4$.  That, is $ f\in\ell(\calX,\calY)$ where $\calX=\bbZ_6\times\bbZ_8$ and $\calY=\bbZ_5$.  Its characteristic function~\eqref{equation.CharFunction} is a $\set{0,1}$-valued $6\times8\times5$ data cube whose cross-sections (left column) indicate those locations at which $ f$ attains any given value.  By Theorem~\ref{theorem.PropertiesOfLHs}(a), the $6\times 8\times 5$ data cube that contains the local histograms of $ f$ (far right) may be computed one level at a time (right column) by filtering these binary-valued cross-sections with a real-scalar-valued weighting function (middle column).  In this simple example, the weighting function is $w=\tfrac{1}{2}\delta_{0,0}+\tfrac{1}{8}(\delta_{-1,0}+\delta_{1,0}+\delta_{0,-1}+\delta_{0,1})$, where the origin lies in the upper left-hand corner of the grid.}
\end{center}
\end{figure}
Alternatively, \eqref{equation.definition of local histogram} can be computed as a single convolution over $\calX\times\calY$; here, the tensor product of $ w\in\ell(\calX,\bbR)$ with $\omega\in\ell(\calY,\bbR)$ is defined as $ w\otimes\omega\in\ell(\calX\times\calY,\bbR)$, $( w\otimes\omega)(x,y):= w(x)\omega(y)$.
\begin{thm}
\label{theorem.PropertiesOfLHs}
For any $ w\in\ell(\calX,\bbR)$, $\omega\in\ell(\calY,\bbR)$, $ f\in\ell(\calX,\calY)$, $x\in\calX$, and $y\in\calY$:
\begin{enumerate}
\alphi
\item
Local histograms \eqref{equation.definition of local histogram} can be evaluated as a system of $\abs{\calY}$ convolutions over $\calX$: $(\LH_ w  f)(x,y)=(\tilde{ w}\ast1_{ f^{-1}\{y\}})(x)$.
\item
Alternatively, \eqref{equation.definition of local histogram} may be computed as a single convolution over $\calX\times\calY$: $\conv{(\delta_0\otimes\omega)}{\LH_ w f}=\conv{(\tilde{ w}\otimes\omega)}{1_ f}$.

In particular, taking $\omega=\delta_0$ gives $\LH_ w f=\conv{(\tilde{ w}\otimes\delta_0)}{1_ f}$.
\end{enumerate}
\end{thm}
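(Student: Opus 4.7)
The plan is to unwind the definitions directly: both parts are essentially bookkeeping exercises that reduce the stated convolutional identities to the definition~\eqref{equation.definition of local histogram} after a substitution in the summation variable.

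For part (a), I would begin by rewriting the Kronecker factor $\delta_y(f(x+x'))$ appearing in~\eqref{equation.definition of local histogram} in terms of the characteristic function~\eqref{equation.CharFunction}, namely $\delta_y(f(x+x')) = 1_{f^{-1}\{y\}}(x+x')$. Substituting this into~\eqref{equation.definition of local histogram} gives
\begin{equation*}
(\LH_w f)(x,y) = \sum_{x'\in\calX} w(x')\, 1_{f^{-1}\{y\}}(x+x').
\end{equation*}
Then the change of summation variable $x'' = -x'$ turns $w(x')$ into $\tilde w(x'')$ and $x+x'$ into $x - x''$, producing exactly the definition of $(\tilde w \ast 1_{f^{-1}\{y\}})(x)$. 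This is the only nontrivial manipulation in (a), and the mild subtlety to watch is that the reversal $\tilde w(x) = w(-x)$ is precisely what compensates for the $+x'$ in~\eqref{equation.definition of local histogram} versus the $-x'$ that naturally appears in a convolution.

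For part (b), I would expand both sides of the claimed identity using the definition of convolution on $\calX\times\calY$. The left-hand side simplifies to $\sum_{y'\in\calY}\omega(y')(\LH_w f)(x,y-y')$ because the tensor factor $\delta_0$ collapses the sum over $\calX$. Applying part (a) to each $(\LH_w f)(x,y-y')$ converts this into a double sum in which the remaining $\calX$-variable is a true convolution against $\tilde w$; replacing $1_{f^{-1}\{y-y'\}}(x-x'')$ by $1_f(x-x'',y-y')$ reveals the right-hand side $((\tilde w \otimes \omega)\ast 1_f)(x,y)$. The special case $\omega = \delta_0$ is then immediate, since $\delta_0\otimes\delta_0$ is the identity for convolution on $\calX\times\calY$, leaving $\LH_w f = (\tilde w \otimes \delta_0)\ast 1_f$.

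There is no real obstacle here beyond correctly handling the variable substitutions and keeping the convolution indices straight; the proof is essentially a direct unwinding of the definitions of $\LH_w$, the characteristic function $1_f$, the reversal $\tilde w$, and the tensor and convolution operations.
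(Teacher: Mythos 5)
Your proposal is correct and follows essentially the same route as the paper's proof: part (a) by rewriting the Kronecker delta as the characteristic function and negating the summation variable so that $\tilde w$ appears, and part (b) by collapsing the $\calX$-sum via $\delta_0$, invoking part (a), and reassembling the double sum as a convolution against $\tilde w\otimes\omega$. The only cosmetic difference is the order in which you perform the substitution and the sign change in (a), and your observation that $\delta_0\otimes\delta_0$ is the convolution identity cleanly justifies the special case the paper states without comment.
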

\begin{proof}
For (a), replacing $x'$ with $-x'$, and substituting the relation $\delta_{y}( f(x-x'))=1_{ f^{-1}\{y\}}(x-x')$ into \eqref{equation.definition of local histogram} yields:
\begin{equation}
(\LH_ w f)(x,y)
=\sum_{x'\in\calX} w(x')\delta_y( f(x+x'))
=\sum_{x'\in\calX} w(-x')1_{ f^{-1}\{y\}}(x-x')
=\sum_{x'\in\calX}\tilde{ w}(x')1_{ f^{-1}\{y\}}(x-x')
=(\tilde{ w}\ast1_{ f^{-1}\{y\}})(x).
\label{equation.2DConvResult}
\end{equation}
For (b), the definition of $\delta_0$ gives:
\begin{equation}
[\conv{(\delta_0\otimes\omega)}{\LH_ w f}](x,y)
=\sum_{(x',y')\in\calX\times\calY}(\delta_0\otimes\omega)(x',y')(\LH_ w f)(x-x',y-y')
=\sum_{y'\in\calY}\omega(y')(\LH_ w f)(x,y-y').
\label{equation.3DConvResult}
\end{equation}
Substituting \eqref{equation.2DConvResult} into \eqref{equation.3DConvResult} and using \eqref{equation.CharFunction}, gives our result:
\begin{align*}
\nonumber
[\conv{(\delta_0\otimes\omega)}{\LH_ w f}](x,y)
&=\sum_{y'\in\calY}\omega(y')(\conv{\tilde{ w}}{1_{ f^{-1}\{y-y'\}}})(x)\\
\nonumber
&=\sum_{y'\in\calY}\omega(y')\sum_{x'\in\calX}\tilde{ w}(x')1_{ f^{-1}\{y-y'\}}(x-x')\\
\nonumber
&=\sum_{(x',y')\in\calX\times\calY}(\tilde{ w}\otimes\omega)(x',y')1_ f(x-x',y-y')\\
&=[\conv{(\tilde{ w}\otimes\omega)}{1_ f}](x,y).\qedhere
\end{align*}
\end{proof}
\noindent
The next result summarizes several other basic properties of local histograms, the proofs of which are given in~\cite{Bhagavatula10Nov,Massar10Dec}.
\begin{prop}
\label{proposition.LHresults}
For any $ w\in\ell(\calX,\bbR)$ and $ f\in\ell(\calX,\calY)$:
\begin{enumerate}
\alphi
\item
The levels of a local histogram transform sum to $1$: for any $x\in\calX$, $\sum_{y\in\calY}(\LH_ w f)(x,y)=1$.
\item
Local histograms commute with spatial translation $\rmT^{x}$: for any $x\in\calX$, $\LH_ w \rmT^{x}=\rmT^{(x,0)}\LH_ w$ .
\item
Adding constants to images shifts their local histograms along $\calY$: for any $y\in\calY$, $\LH_ w( f+y)=\rmT^{(0,y)}\LH_ w f$ .
\item
Quantizing an image will bin its local histograms: for any $q\in\ell(\calY,\calY')$,
\begin{equation*}
[\LH_ w(q\circ f)](x,y')=\sum_{\substack{y\in\calY\\q(y)=y'}}(\LH_ w f)(x,y).
\end{equation*}
\end{enumerate}
\end{prop}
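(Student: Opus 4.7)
The plan is to handle each of the four parts of Proposition~\ref{proposition.LHresults} by directly unwinding the definition~\eqref{equation.definition of local histogram} of $\LH_w f$ and applying an elementary identity about the indicator functions $\delta_y$; the only non-definitional ingredient needed is the normalization $\sum_{x'\in\calX} w(x') = 1$ of the weighting function, together with Fubini-style interchanges of summation.

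For part~(a), I would swap $\sum_{y\in\calY}$ with $\sum_{x'\in\calX}$ in the expression $\sum_{y\in\calY}\sum_{x'\in\calX} w(x')\delta_y(f(x+x'))$. For any fixed $x'$ there is exactly one $y\in\calY$ making $\delta_y(f(x+x'))$ nonzero, namely $y = f(x+x')$, so the inner sum collapses to $1$; what remains is $\sum_{x'} w(x') = 1$.

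Parts~(b) and~(c) are essentially substitutions. Writing $(\rmT^x f)(z) := f(z-x)$ and expanding each side of the identity in~(b), both $(\LH_w \rmT^x f)(z,y)$ and $(\rmT^{(x,0)}\LH_w f)(z,y) = (\LH_w f)(z-x,y)$ reduce to $\sum_{x'} w(x') \delta_y(f(z + x' - x))$, so the two sides agree term-by-term. For~(c), the key identity is $\delta_{y'}(z+y) = \delta_{y'-y}(z)$ for any $z,y,y'\in\calY$; applying it with $z = f(x+x')$ converts the summand of $(\LH_w(f+y))(x,y')$ into that of $(\LH_w f)(x,y'-y)$, which is precisely $(\rmT^{(0,y)}\LH_w f)(x,y')$.

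For part~(d), the observation is that for any $y'\in\calY'$ and $z\in\calY$, $\delta_{y'}(q(z)) = \sum_{y\in q^{-1}\{y'\}} \delta_y(z)$, since both sides equal $1$ exactly when $z\in q^{-1}\{y'\}$ and vanish otherwise. Substituting this into the definition of $\LH_w(q\circ f)$ with $z = f(x+x')$, and then exchanging the order of the sums over $y$ and $x'$, yields the claimed binning identity. There is no real obstacle here: all four assertions follow from routine unwinding of definitions, and the entire proof is bookkeeping.
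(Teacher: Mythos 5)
Your proof is correct. The paper itself does not prove Proposition~\ref{proposition.LHresults} (it defers to the cited references), but your argument---unwinding the definition~\eqref{equation.definition of local histogram}, using the normalization $\sum_{x'\in\calX}w(x')=1$ for (a), the substitution for (b), and the indicator identities $\delta_{y'}(z+y)=\delta_{y'-y}(z)$ and $\delta_{y'}(q(z))=\sum_{y\in q^{-1}\set{y'}}\delta_y(z)$ for (c) and (d)---is the standard one, and every step checks out.
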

This basic understanding of local histograms in hand, we turn to the theory of applying them to textures generated by the probabilistic image occlusion models discussed in the introduction.


\section{Local histograms of randomly-generated textures}
\label{Section.Occlusion}

In this section, we rigorously confirm our intuition regarding local histograms of textures generated via random occlusions: if a texture, such as that found in the pseudovascular tissue of Figure~\ref{figure.histograms}(d), is some sufficiently-spatially-random combination of $50\%$ pink pixels, $25\%$ purple pixels and $25\%$ red pixels, then its local histograms should, on average, be a mixture of three simpler distributions, namely a convex combination of $0.5$ of a purely pink distribution with $0.25$ purely purple and red ones.

To do this, fix any set of $N$ source images $\set{ f_n}_{n=0}^{N-1}$ and let $\Phi$ be any occlusion model as defined in the introduction.  That is, let $\Phi$ be a random variable version of a label function $\varphi:\calX\rightarrow\bbZ_N$, as defined by a probability density function $\rmP_\Phi:\ell(\calX,\bbZ_N)\rightarrow[0,1]$ where $\sum_{\varphi\in\ell(\calX,\bbZ_N)}\rmP_\Phi(\varphi)=1$.  In the results that follow, a useful quantity to consider is the expected value---with respect to $\rmP_\Phi$---of the characteristic function $1_\varphi$ obtained by letting $ f=\varphi$ in~\eqref{equation.CharFunction}:
\begin{equation}
\label{equation.ChiBar}
\overline1_\Phi(x,n)
:=\sum_{\varphi\in\ell(\calX,\bbZ_N)}\rmP_\Phi(\varphi)1_\varphi(x,n)
=\sum_{\substack{\varphi\in\ell(\calX,\bbZ_N)\\\varphi(x)=n}}\rmP_\Phi(\varphi).
\end{equation}
Essentially, $\overline1_\Phi(x,n)$ is the probability that a random label function $\varphi$ generated by the occlusion model $\Phi$ will assign label $n$ to pixel location $x$.  When compared with the definition of flatness~\eqref{equation.definition of flatness}, we see that $\Phi$ is flat if and only if there exist scalars $\set{\lambda_n}_{n=0}^{N-1}$ such that $\overline1_\Phi(x,n)=\lambda_n$ for all $x\in\calX$ and $n\in\bbZ_N$.  That is, $\Phi$ is flat if and only if $\overline1_\Phi(x,n)$ is constant with respect to pixel location $x$.  Having this concept, we present one of our main results:
\begin{thm}
\label{theorem.expected value of lh of occ}
For any sequence of images $\{ f_n\}_{n=0}^{N-1}\in\ell(\calX,\calY)$, weighting function $ w$ and any $N$-image occlusion model $\Phi$, the expected value of the local histogram~\eqref{equation.definition of local histogram} of the composite image~\eqref{equation.definition of occlusion} with respect to $ w$ is:
\begin{equation}
\label{equation.OcclusionResult}
\rmE_\Phi(\LH_ w\occ{\Phi}{\{ f_n\}_{n=0}^{N-1}})(x,y)
=\sum_{n=0}^{N-1}\overline1_\Phi(x,n)(\LH_ w f_n)(x,y)+\varepsilon,
\end{equation}
where the error term $\varepsilon$ is bounded by $\abs{\varepsilon}\leq\displaystyle\sum_{n=0}^{N-1}\sum_{x'\in\calX} w(x')\abs{\overline1_\Phi(x+x',n)-\overline1_\Phi(x,n)}$.  Moreover, 
\begin{equation}
\label{equation.expected value of local histograms of occluded images}
\sum_{n=0}^{N-1}\overline1_\Phi(x,n)
=1,
\end{equation}
and so \eqref{equation.OcclusionResult} states that, on average, the local histograms of the composite image $\occ{\varphi}\{ f_n\}_{n=0}^{N-1}$ can be approximated by convex combinations of local histograms of each individual image $ f_n$.
\end{thm}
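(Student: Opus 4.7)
The plan is to reduce the claim to a routine manipulation of sums by exploiting an indicator decomposition of $\delta_y\bigl(f_{\varphi(z)}(z)\bigr)$ and then absorbing the expectation into $\overline{1}_\Phi$. First, the identity~\eqref{equation.expected value of local histograms of occluded images} is immediate: directly from~\eqref{equation.ChiBar},
\[
\sum_{n=0}^{N-1}\overline{1}_\Phi(x,n)
=\sum_{n=0}^{N-1}\sum_{\substack{\varphi\in\ell(\calX,\bbZ_N)\\\varphi(x)=n}}\rmP_\Phi(\varphi)
=\sum_{\varphi\in\ell(\calX,\bbZ_N)}\rmP_\Phi(\varphi)=1,
\]
since every $\varphi$ assigns exactly one label at $x$ and the density sums to one.

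For the main claim~\eqref{equation.OcclusionResult}, the key observation is the pointwise identity
\[
\delta_y\bigl(f_{\varphi(z)}(z)\bigr)=\sum_{n=0}^{N-1}1_\varphi(z,n)\,\delta_y\bigl(f_n(z)\bigr),\qquad z\in\calX,
\]
which holds because for the unique $n=\varphi(z)$ the factor $1_\varphi(z,n)$ equals $1$ and $f_n(z)=f_{\varphi(z)}(z)$, while all other terms vanish. I would substitute this (with $z=x+x'$) into~\eqref{equation.definition of local histogram} applied to the composite~\eqref{equation.definition of occlusion}, and then interchange the finite sums over $\varphi$, $x'$, and $n$; the expectation over $\Phi$ collapses onto $1_\varphi(x+x',n)$ and produces $\overline{1}_\Phi(x+x',n)$ via~\eqref{equation.ChiBar}, yielding the exact formula
\[
\rmE_\Phi\bigl(\LH_w\occ{\Phi}\{f_n\}_{n=0}^{N-1}\bigr)(x,y)
=\sum_{n=0}^{N-1}\sum_{x'\in\calX}w(x')\,\overline{1}_\Phi(x+x',n)\,\delta_y\bigl(f_n(x+x')\bigr).
\]

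To match the target convex combination, I would next add and subtract $\overline{1}_\Phi(x,n)$ inside the $x'$-sum. The ``frozen'' piece pulls $\overline{1}_\Phi(x,n)$ outside the $x'$-sum, and the remaining inner sum is precisely $(\LH_w f_n)(x,y)$ by~\eqref{equation.definition of local histogram}, producing the desired main term $\sum_{n=0}^{N-1}\overline{1}_\Phi(x,n)(\LH_w f_n)(x,y)$. The leftover residue is
\[
\varepsilon
=\sum_{n=0}^{N-1}\sum_{x'\in\calX}w(x')\bigl[\overline{1}_\Phi(x+x',n)-\overline{1}_\Phi(x,n)\bigr]\delta_y\bigl(f_n(x+x')\bigr),
\]
and the stated bound on $|\varepsilon|$ follows at once from the triangle inequality together with $w\geq 0$ and $|\delta_y(\cdot)|\leq 1$.

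No genuine obstacle is anticipated; the main care needed is the bookkeeping involved in swapping the orders of summation over $\varphi$, $x'$, and $n$, and in recognizing that inserting the constant $\overline{1}_\Phi(x,n)$ is precisely what converts the inner $x'$-sum into a local histogram of the single deterministic image $f_n$. As a sanity check, under the flatness hypothesis~\eqref{equation.definition of flatness} the difference $\overline{1}_\Phi(x+x',n)-\overline{1}_\Phi(x,n)$ vanishes identically, so $\varepsilon=0$ and~\eqref{equation.OcclusionResult} collapses to~\eqref{equation.theorem on flatness 1}; in particular, this plan also yields Theorem~\ref{theorem.flatness} as an immediate corollary.
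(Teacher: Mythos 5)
Your proposal is correct and follows essentially the same route as the paper's proof: the same indicator decomposition $\delta_y(f_{\varphi(z)}(z))=\sum_n 1_\varphi(z,n)\delta_y(f_n(z))$, the same interchange of sums producing $\overline{1}_\Phi(x+x',n)$, the same add-and-subtract definition of $\varepsilon$, and the same triangle-inequality bound. No substantive differences to report.
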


\begin{proof}
The expected value of the local histogram \eqref{equation.definition of local histogram} of a composite image \eqref{equation.definition of occlusion} is:
\begin{equation}
\label{equation.proof of expected value of lh of occ 1}
\rmE_\Phi(\LH_ w\occ{\Phi}\{ f_n\}_{n=0}^{N-1})(x,y)
=\sum_{\varphi\in\ell(\calX,\bbZ_N)}\rmP_\Phi(\varphi)\sum_{x'\in\calX} w(x')\delta_y((\occ{\varphi}\{ f_n\}_{n=0}^{N-1})(x+x')).
\end{equation}
For any fixed $\varphi$, $x$, and $x'$, we have $\varphi(x+x')=n$ for exactly one $n$.  For any fixed $x$, $x'$ and $y$, we can therefore split a sum of $1_\varphi(x+x',n)\delta_{y}( f_n(x+x'))$ over all $n$ into one summand where $n=\varphi(x+x')$ and the remaining $N-1$ summands for which $n\neq\varphi(x+x')$:
\begin{equation}
\label{equation.proof of expected value of lh of occ 2}
\sum_{n=0}^{N-1}1_\varphi(x+x',n)\delta_y( f_n(x+x'))
=(1)\delta_y( f_{\varphi(x+x')}(x+x'))+\sum_{n\neq\varphi(x+x')}(0)\delta_y( f_n(x+x'))
=\delta_y((\occ{\varphi}\{ f_n\}_{n=0}^{N-1})(x+x')),
\end{equation}
where the final equality follows immediately from \eqref{equation.definition of occlusion}.
Substituting \eqref{equation.proof of expected value of lh of occ 2} into \eqref{equation.proof of expected value of lh of occ 1} and using \eqref{equation.ChiBar} yields:
\begin{align}
\nonumber
\rmE_\Phi(\LH_ w\occ{\Phi}\{ f_n\}_{n=0}^{N-1})(x,y)
&=\sum_{\varphi\in\ell(\calX,\bbZ_N)}\rmP_\Phi(\varphi)\sum_{x'\in\calX} w(x')\biggparen{\,\sum_{n=0}^{N-1}1_\varphi(x+x',n)\delta_y( f_n(x+x'))}\\
\nonumber
&=\sum_{n=0}^{N-1}\sum_{x'\in\calX} w(x')\delta_y( f_n(x+x'))\sum_{\varphi\in\ell(\calX,\bbZ_N)}\rmP_\Phi(\varphi)1_\varphi(x+x',n)\\
\label{equation.proof of expected value of lh of occ 3}
&=\sum_{n=0}^{N-1}\sum_{x'\in\calX} w(x')\delta_y( f_n(x+x'))\overline1_\Phi(x+x',n).
\end{align}
Rewriting~\eqref{equation.proof of expected value of lh of occ 3} in terms of $\displaystyle\varepsilon:=\sum_{n=0}^{N-1}\sum_{x'\in\calX} w(x')\delta_y( f_n(x+x'))[\overline1_\Phi(x+x',n)-\overline1_\Phi(x,n)]$ gives our first claim~\eqref{equation.OcclusionResult}:
\begin{align*}
\rmE_\Phi(\LH_ w\occ{\Phi}\{ f_n\}_{n=0}^{N-1})(x,y)
&=\sum_{n=0}^{N-1}\sum_{x'\in\calX} w(x')\delta_y( f_n(x+x'))\overline1_\Phi(x,n)+\varepsilon\\
&=\sum_{n=0}^{N-1}\overline1_\Phi(x,n)\sum_{x'\in\calX} w(x')\delta_y( f_n(x+x'))+\varepsilon\\
&=\sum_{n=0}^{N-1}\overline1_\Phi(x,n)(\LH_ w f_n)(x,y)+\varepsilon.
\end{align*}
For the second claim, we bound $\varepsilon$ using the triangle inequality and the fact that $\abs{\delta_y( f_n(x+x'))}\leq1$:
\begin{equation*}
\abs{\varepsilon}
=\Biggabs{\sum_{n=0}^{N-1}\sum_{x'\in\calX} w(x')\delta_y( f_n(x+x'))[\overline1_\Phi(x+x',n)-\overline1_\Phi(x,n)]}
\leq\sum_{n=0}^{N-1}\sum_{x'\in\calX} w(x')\abs{\overline1_\Phi(x+x',n)-\overline1_\Phi(x,n)}.
\end{equation*} 
Finally, to prove our third claim~\eqref{equation.expected value of local histograms of occluded images}, note that for any fixed $x\in\calX$, \eqref{equation.ChiBar} gives:
\begin{equation}
\label{equation.proof of expected value of lh of occ 4}
\sum_{n=0}^{N-1}\overline1_\Phi(x,n)
=\sum_{n=0}^{N-1}\sum_{\varphi\in\ell(\calX,\bbZ_{N})}\rmP_\Phi(\varphi)1_\varphi(x,n)
=\sum_{\varphi\in\ell(\calX,\bbZ_{N})}\rmP_\Phi(\varphi)\sum_{n=0}^{N-1}\left\{\begin{array}{ll}1,&\varphi(x)=n,\\0,&\varphi(x)\neq n.\end{array}\right.
\end{equation}
Since as previously noted we have $\varphi(x)=n$ for exactly one $n$, \eqref{equation.proof of expected value of lh of occ 4} becomes: $\displaystyle\sum_{n=0}^{N-1}\overline1_\Phi(x,n)=\sum_{\varphi\in\ell(\calX,\bbZ_{N})}\rmP_\Phi(\varphi)=1$.
\end{proof}
An example illustrating the direct computation of the left-hand side of~\eqref{equation.OcclusionResult} is given in Figure~\ref{figure.ELH}.
\begin{figure}[t]
\begin{center}
\includegraphics{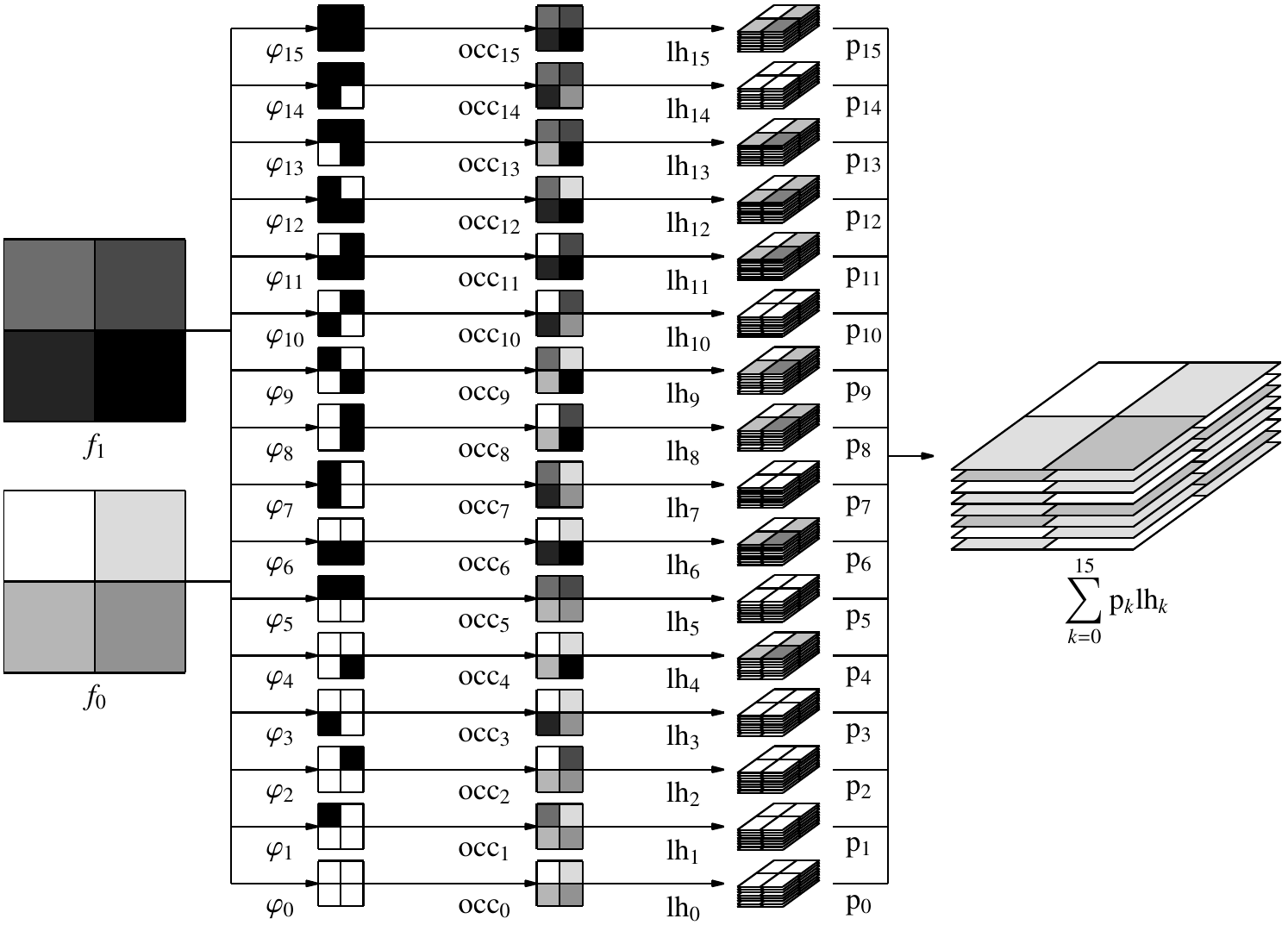}
\caption{
\label{figure.ELH}
An example of how to compute the left-hand side of~\eqref{equation.OcclusionResult} explicitly as a probability-weighted sum.  For the sake of readability, larger numerical values are represented by darker shades throughout.  We consider two $2\times 2$, $3$-bit images, namely $\set{ f_n}_{n=0}^{N-1}$ in $\ell(\calX,\calY)$ where $N=2$, $\calX=\bbZ_2\times\bbZ_2$ and $\calY=\bbZ_8$.  In this particular example, the values of the $ f_n$'s are all distinct, with $ f_0$ assuming values $\set{0,1,2,3}$ and $ f_1$ assuming values $\set{4,5,6,7}$ (far left).  There are \smash{$N^{\abs{\calX}}=2^{2^2}=16$} distinct label functions $\varphi:\bbZ_2\times\bbZ_2\rightarrow\bbZ_2$ (left column) each yielding a composite image $\occ{\varphi}\set{ f_0, f_1}$ (center column); in accordance with~\eqref{equation.definition of occlusion}, we take values from $ f_0$ in places where $\varphi$ is white and values from $ f_1$ where $\varphi$ is black.  Each of these composites has a $2\times 2\times 8$ local histogram transform~\eqref{equation.definition of local histogram} (right column).  Since occlusion~\eqref{equation.definition of occlusion} is nonlinear, there is no clean relationship between the local histograms of any single composite and the local histograms of the source images $ f_0$ and $ f_1$.  Nevertheless, under certain hypotheses, we can say something about the average of these local histograms (far right) with respect to some probability density function $\rmP_\Phi$ on the set $\ell(\bbZ_2\times\bbZ_2,\bbZ_2)$ of all possible $\varphi$'s.  In particular, if the occlusion model $\Phi$ is flat~\eqref{equation.definition of flatness}, meaning in this case that the probability-weighted-sum of all $\varphi$'s is a constant function of $x$, then Theorem~\ref{theorem.flatness} states that this average is a convex combination of the local histograms of $ f_0$ and $ f_1$ as depicted in Figure~\ref{figure.ELH2}.
}
\end{center}
\end{figure}
Note that Theorem~\ref{theorem.expected value of lh of occ} implies that the error term $\varepsilon$ in~\eqref{equation.OcclusionResult} will be small provided the probability $\overline1_\Phi(x,n)$ of assigning label $n$ to $x$ changes little as $x$ varies over regions smaller than the the support of $ w$.  The extreme case of this is when the occlusion model $\Phi$ is flat, meaning $\overline1_\Phi(x,n)$ is constant with respect to $x$.  In this case, $\varepsilon$ vanishes entirely, leading to Theorem~\ref{theorem.flatness} as given in the introduction:
\begin{proof}[Proof of Theorem~\ref{theorem.flatness}]
If $\Phi$ is flat, $\overline1_\Phi(x+x',n)=\overline1_\Phi(x,n)$ for all $x,x'\in\calX$.  The error bound in Theorem~\ref{theorem.expected value of lh of occ} then gives $\varepsilon=0$.  Denoting $\overline1_\Phi(x,n)$ as $\lambda_n$ in~\eqref{equation.OcclusionResult} thus yields our claim.
\end{proof}
That is, when $\Phi$ is flat, \eqref{equation.OcclusionResult} simplifies to~\eqref{equation.theorem on flatness 1}, and so the in-depth computation of Figure~\ref{figure.ELH} can be replaced by the much simpler one depicted in Figure~\ref{figure.ELH2}.
\begin{figure}
\begin{center}
{\includegraphics{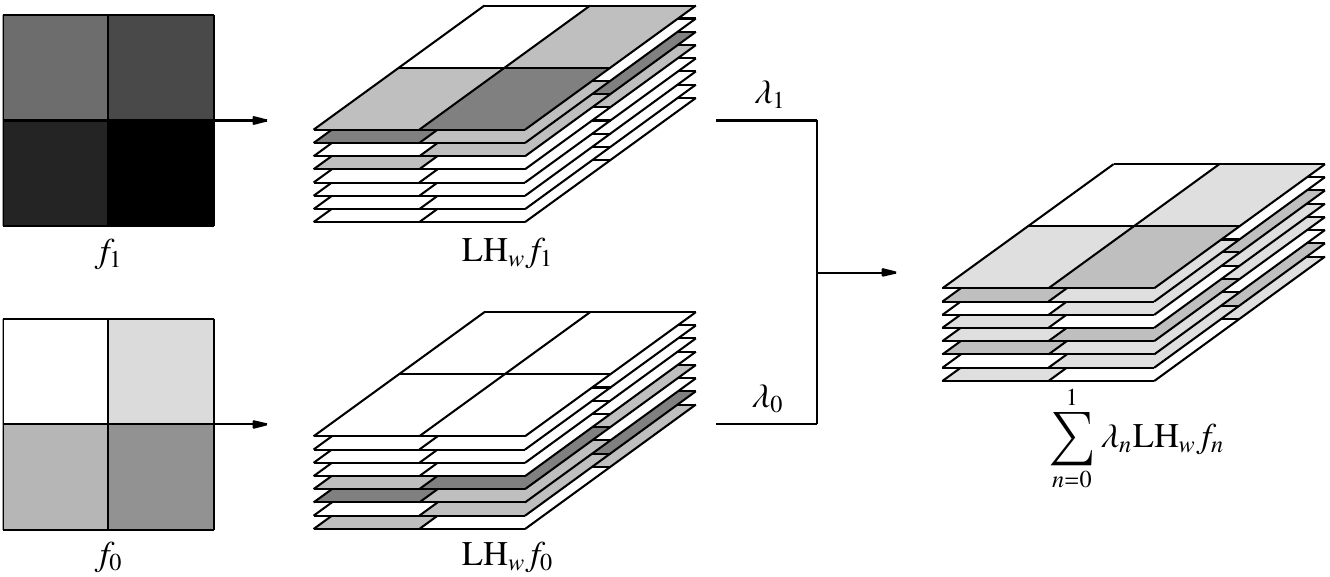}}
\caption{
\label{figure.ELH2}
A continuation of the example of Figure~\ref{figure.ELH}.  When the occlusion model $\Phi$ is flat, Theorem~\ref{theorem.expected value of lh of occ} becomes Theorem~\ref{theorem.flatness}, with \eqref{equation.OcclusionResult} simplifying to~\eqref{equation.theorem on flatness 1}.  Though each of the $16$ distinct composite images shown in Figure~\ref{figure.ELH} has a distinct local histogram transform, the average of these $16$ local histogram transforms with respect to $\rmP_\Phi$ is but a convex combination (right) of the local histograms (center) of the two source images (left).  That is, when $\Phi$ is flat, the average-over-all-composites local histogram computed in Figure~\ref{figure.ELH} is equal to the average-over-all-sources local histogram computed above.}
\end{center}
\end{figure}
Thus, flatness is indeed an important theoretical assumption for the analysis of local histograms of textures generated via random occlusions.  It nevertheless remains to be shown that flatness is also a realistic assumption from the point of view of our motivating application; this is the topic of the next section.


\section{Flat occlusion models}
\label{Section.Flatness}

Theorem~\ref{theorem.flatness} gives some insight into the behavior of the local histograms of images generated via random occlusions.  However, this result only holds when $\Phi$ is flat~\eqref{equation.definition of flatness}, namely when its average characteristic function $\overline1_\Phi(x,n)$, as defined in~\eqref{equation.ChiBar}, is constant with respect to pixel location $x$, but is still permitted to vary with label value $n$.  In this section, we demonstrate that flatness is a reasonable assumption.  In particular, we provide a variety of methods for constructing flat occlusion models.  Some of these models produce textures similar to those encountered in digital microscope images of histological tissues.  Our first method involves the translation operator $\rmT^x:\calX\rightarrow\calX$, $\rmT^x\varphi(x'):=\varphi(x'-x)$.  To be precise, we show that an occlusion model $\Phi$ is flat if it is \textit{translation-invariant}, meaning that its probability density function $\rmP_\Phi$ satisfies:
\begin{equation}
\label{equation.definition of translation invariance}
\rmP_\Phi(\rmT^x\varphi)=\rmP_\Phi(\varphi),\quad \forall \varphi\in\ell(\calX,\bbZ_{N}), x\in\calX.
\end{equation}
\begin{thm}
\label{theorem.PJ}
If $\Phi$ is translation-invariant~\eqref{equation.definition of translation invariance}, then $\Phi$ is flat~\eqref{equation.definition of flatness}.
\end{thm}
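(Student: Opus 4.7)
The plan is to show directly that the average characteristic function $\overline{1}_\Phi(x,n)$ defined in~\eqref{equation.ChiBar} does not depend on $x$; by the reformulation of flatness noted just before the statement of Theorem~\ref{theorem.expected value of lh of occ}, this is exactly what is needed.

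Fix $n \in \bbZ_N$ and any two locations $x, x' \in \calX$. I would like to exhibit a bijection on $\ell(\calX, \bbZ_N)$ that carries the set $\{\varphi : \varphi(x) = n\}$ onto the set $\{\psi : \psi(x') = n\}$ while preserving $\rmP_\Phi$. The natural candidate is translation by $x' - x$, namely $\varphi \mapsto \rmT^{x'-x}\varphi$. Using the definition $(\rmT^{y}\varphi)(z) = \varphi(z - y)$, one computes $(\rmT^{x'-x}\varphi)(x') = \varphi(x' - (x'-x)) = \varphi(x)$, so indeed $\varphi(x) = n$ if and only if $(\rmT^{x'-x}\varphi)(x') = n$. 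The map is clearly a bijection with inverse $\rmT^{x-x'}$.

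Putting this together, reindexing the sum in~\eqref{equation.ChiBar} via this bijection and then applying the translation-invariance hypothesis~\eqref{equation.definition of translation invariance} gives
\begin{equation*}
\overline{1}_\Phi(x, n)
= \sum_{\substack{\varphi \in \ell(\calX, \bbZ_N) \\ \varphi(x) = n}} \rmP_\Phi(\varphi)
= \sum_{\substack{\psi \in \ell(\calX, \bbZ_N) \\ \psi(x') = n}} \rmP_\Phi(\rmT^{x-x'}\psi)
= \sum_{\substack{\psi \in \ell(\calX, \bbZ_N) \\ \psi(x') = n}} \rmP_\Phi(\psi)
= \overline{1}_\Phi(x', n).
\end{equation*}
Setting $\lambda_n := \overline{1}_\Phi(0, n)$ (or any other reference value) then yields the condition~\eqref{equation.definition of flatness}. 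There is no real obstacle here; the only step requiring any care is checking that $\rmT^{x'-x}$ indeed swaps the two indexing sets in the way claimed, which is a direct unwinding of the definition of the translation operator on $\ell(\calX, \bbZ_N)$.
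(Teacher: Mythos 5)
Your proof is correct, and it takes a genuinely different and more economical route than the paper's. You observe that for fixed $n$ and any two locations $x,x'$, the translation $\varphi\mapsto\rmT^{x'-x}\varphi$ is a $\rmP_\Phi$-preserving bijection of $\ell(\calX,\bbZ_N)$ carrying $\set{\varphi:\varphi(x)=n}$ onto $\set{\psi:\psi(x')=n}$, so the two sums defining $\overline1_\Phi(x,n)$ and $\overline1_\Phi(x',n)$ are literally reindexings of one another; the key identity $(\rmT^{x'-x}\varphi)(x')=\varphi(x)$ is checked correctly against the paper's convention $\rmT^x\varphi(x'):=\varphi(x'-x)$. The paper instead partitions $\ell(\calX,\bbZ_N)$ into translation-orbits, introduces for each representative $\varphi$ the stabilizer subgroup $\calW_\varphi=\set{x\in\calX:\rmT^x\varphi=\varphi}$, and sums over coset representatives to arrive at the explicit formula
\begin{equation*}
\overline1_\Phi(x,n)=\sum_{\varphi\in\calR}\frac{\rmP_\Phi(\varphi)}{\abs{\calW_\varphi}}\,\abs{\varphi^{-1}\set{n}},
\end{equation*}
which is manifestly independent of $x$. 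Your argument establishes the theorem with far less machinery (no orbits, stabilizers, or counting), while the paper's longer computation buys an interpretable closed form for the constants $\lambda_n$: each equals an orbit-weighted average of the fraction of pixel locations that $\varphi$ assigns label $n$. Both proofs are valid; yours is the shorter path to the stated conclusion.
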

\begin{proof}
We begin by placing an equivalence relation $\sim$ on $\ell(\calX,\bbZ_N)$, letting $\varphi'\sim\varphi$ when there exists some $x\in\calX$ such that $\varphi'=\rmT^x\varphi$.  Letting $\calR$ denote a set of representatives from the corresponding equivalence classes, we have that for all $\varphi'\in\ell(\calX,\bbZ_N)$, there exists a unique $\varphi\in\calR$ such that $\varphi'=\rmT^x\varphi$.  As such, 
\begin{equation}
\label{equation.proof of translation-invariant implies flat j1}
\overline1_\Phi
=\sum_{\varphi\in\ell(\calX,\bbZ_N)}\rmP_{\Phi}(\varphi)1_{\varphi}
=\sum_{\varphi\in\calR}\sum_{\varphi'\sim\varphi}\rmP_\Phi(\varphi')1_{\varphi'}.
\end{equation}
Now, fix any $\varphi\in\calR$, and consider the subgroup $\calW_\varphi=\set{x\in\calX : \rmT^x\varphi=\varphi}$ of the finite abelian group $\calX$.  Letting $\calX/\calW_\varphi$ denote a fixed set of coset representatives of $\calX$ with respect to $\calW_\varphi$, we claim that $\beta:\calX/\calW_\varphi\rightarrow\{\varphi':\varphi'\sim\varphi\}$, $\beta(x):=\rmT^x\varphi$ is a bijection.

Indeed, to show $\beta$ is one-to-one, note that if $\rmT^x\varphi=\beta(x)=\beta(x')=\rmT^{x'}\varphi$, then $\rmT^{x-x'}\varphi=\varphi$, implying $x-x'\in\calW_\varphi$; since $x$ and $x'$ are both coset representatives of $\calX/\calW_\varphi$, this is a contradiction unless $x=x'$.  Meanwhile, to show $\beta$ is onto, take any $\varphi'\sim\varphi$, and consider a corresponding $x'$ such that $\varphi'=\rmT^{x'}\varphi$.  Taking the unique $x\in\calX/\calW_\varphi$ and $w\in\calW_\varphi$ such that $x'=x+w$, we have: $\varphi'=\rmT^{x'}\varphi=\rmT^{x+w}\varphi=\rmT^x(\rmT^{w}\varphi)=\rmT^x\varphi=\beta(x)$.

Invoking this claim, along with the assumed translation-invariance of $\Phi$, yields:
\begin{equation}
\label{equation.proof of translation-invariant implies flat j2}
\sum_{\varphi'\sim\varphi}\rmP_\Phi(\varphi')1_{\varphi'}
=\sum_{x\in\calX/\calW_\varphi}\rmP_\Phi(\beta(x))1_{\beta(x)}
=\sum_{x\in\calX/\calW_{\varphi}}\rmP_{\Phi}(\rmT^x\varphi)1_{\rmT^x\varphi}
=\sum_{x\in\calX/\calW_{\varphi}}\rmP_{\Phi}(\varphi)1_{\rmT^x\varphi}
=\rmP_{\Phi}(\varphi)\sum_{x\in\calX/\calW_{\varphi}}1_{\rmT^x\varphi}.
\end{equation}
Again, writing any $x'\in\calX$ as $x'=x+w$, where $x\in\calX/\calW_\varphi$ and $w\in\calW_\varphi$, gives:
\begin{equation}
\label{equation.proof of translation-invariant implies flat j3}
\sum_{x'\in\calX}1_{\rmT^{x'}\varphi}
=\sum_{x\in\calX/\calW_{\varphi}}\sum_{w\in\calW_{\varphi}}1_{\rmT^{x+w}\varphi}
=\Biggparen{\sum_{w\in\calW_{\varphi}}1}\sum_{x\in\calX/\calW_{\varphi}}1_{\rmT^x\varphi}
=\abs{\calW_{\varphi}}\sum_{x\in\calX/\calW_{\varphi}}1_{\rmT^x\varphi}.
\end{equation}
Substituting \eqref{equation.proof of translation-invariant implies flat j3} into \eqref{equation.proof of translation-invariant implies flat j2} gives:
\begin{equation}
\label{equation.proof of translation-invariant implies flat j4}
\sum_{\varphi'\sim\varphi}\rmP_\Phi(\varphi')1_{\varphi'}
=\rmP_{\Phi}(\varphi)\sum_{x\in\calX/\calW_{\varphi}}1_{\rmT^x\varphi}
=\frac{\rmP_{\Phi}(\varphi)}{\abs{\calW_\varphi}}\sum_{x'\in\calX}1_{\rmT^{x'}\varphi}.
\end{equation}
Since
$\displaystyle\sum_{x'\in\calX}1_{\rmT^{x'}\varphi}(x,n)
=\sum_{x'\in\calX}\left\{\begin{array}{ll}1,&\varphi(x-x')=n\\0,&\varphi(x-x')\neq n\end{array}\right\}
=\bigabs{\{x'\in\calX:\varphi(x')=n\}}
=\abs{\varphi^{-1}\{n\}},$
substituting \eqref{equation.proof of translation-invariant implies flat j4} into \eqref{equation.proof of translation-invariant implies flat j1} gives:
\begin{equation*}
\overline1_\Phi(x,n)
=\sum_{\varphi\in\calR}\sum_{\varphi'\sim\varphi}\rmP_\Phi(\varphi')1_{\varphi'}(x,n)
=\sum_{\varphi\in\calR}\frac{\rmP_{\Phi}(\varphi)}{\abs{\calW_\varphi}}\sum_{x'\in\calX}1_{\rmT^{x'}\varphi}(x,n)
=\sum_{\varphi\in\calR}\frac{\rmP_{\Phi}(\varphi)}{\abs{\calW_\varphi}}\abs{\varphi^{-1}\{n\}},
\end{equation*}
implying $\Phi$ is flat, since the value of $\overline1_\Phi(x,n)$ depends only on $n$ and is independent of $x$.
\end{proof}
Theorem~\ref{theorem.PJ} indicates that flatness is not too strong of an assumption.  Indeed, one method for producing a flat model $\Phi$ is to generalize the coin-flipping example given in the introduction: given any random method for picking a number from $\bbZ_N$---a probability spinner---produce $\varphi$ by conducting $\abs{\calX}$ independent spins.  The resulting model $\Phi$ is translation-invariant, and therefore flat, since $\rmP_{\Phi}(\varphi)$ is solely determined by the number of times that $\varphi$ achieves each given value $n$.  Other translation-invariant examples abound.  For instance, for any fixed $\varphi_0$, we can assign equal probability \smash{$\frac1{\abs{\calX}}$} to $\varphi_0$ and each of its translates, and assign probability $0$ to all others;  if the source images $\set{f_n}_{n=0}^{N-1}$ are constant, the composite images~\eqref{equation.definition of occlusion} produced by such a model are all translates of a single image.  More generally, we can always partition the $N^{\abs{\calX}}$ elements of $\ell(\calX,\bbZ_N)$ into translation-invariant equivalence classes and assign any fixed probability to the members of each class, provided we ensure that in the end they all sum to one.  For example, for the case $N=2$ and $\calX=\bbZ_2\times\bbZ_2$ depicted in Figure~\ref{figure.ELH}, we may partition the $16$ possible $\varphi$'s into $7$ such classes, and pick any probabilities $\set{\rmp_k}_{k=0}^{15}$ such that $\rmp_1=\rmp_2=\rmp_3=\rmp_4$, $\rmp_5=\rmp_6$, $\rmp_7=\rmp_8$, $\rmp_9=\rmp_{10}$, $\rmp_{11}=\rmp_{12}=\rmp_{13}=\rmp_{14}$.  Armed with one method---translation-invariance---for producing flat models $\Phi$, we now turn to ways of combining known models to produce more complicated and realistic ones.


\subsection{Expansion}
Digital microscope images of histological tissues often contain randomly distributed blobs.  These blobs correspond to biological structures: cells, nuclei, etc.  The nature of these processes guarantees that the distribution of such structures is roughly uniform, both spatially and in terms of color: two cells cannot occupy the same space; cells will usually grow and reproduce so as to occupy any empty space; cells in a given tissue all have approximately the same size and color patterns.  We want to construct flat occlusion models that emulate such textures, since in light of Theorem~\ref{theorem.flatness}, doing so would formally justify the demixing of local histograms as part of a segmentation-and-classification algorithm.  Note that there is a natural method for randomly generating a set of roughly uniformly-distributed points: flip a coin at each point $x$.  Here, we explore the idea of \textit{expanding} each of these randomly generated points into a given blob.

To be precise, let $\varphi\in\ell(\calX,\bbZ_2)$ indicate a set of randomly generated points.  For each of the points $x\in\calX$ for which $\varphi(x)=1$, we will replace it with a blob whose shape is indicated by some $\psi_x\in\ell(\calX,\bbZ_2)$.  The new texture will be the union of all such blobs.  Formally, given any $\varphi\in\ell(\calX,\bbZ_2)$ and $\{\psi_x\}_{x\in\calX}\in[\ell(\calX,\bbZ_2)]^\calX$, we define the \textit{expansion} of $\varphi$ by $\{\psi_x\}_{x\in\calX}$ to be $\sconv{\varphi}{\{\psi_x\}_{x\in\calX}}\in\ell(\calX,\bbZ_2)$,
\begin{equation}
\label{equation.StarProduct}
(\sconv{\varphi}{\{\psi_{x'}\}_{x'\in\calX}})(x)
:=\left\{\begin{array}{ll}1,&x=x'+x'',\varphi(x')=1,\psi_{x'}(x'')=1,\\0,&\text{else}.\end{array}\right.
\end{equation}
Two examples of this expansion operation are given in Figure~\ref{figure.Expansion}.
\begin{figure}[t]
\begin{center}
\subfigure[$\varphi$]
{\includegraphics[width=.32\textwidth]{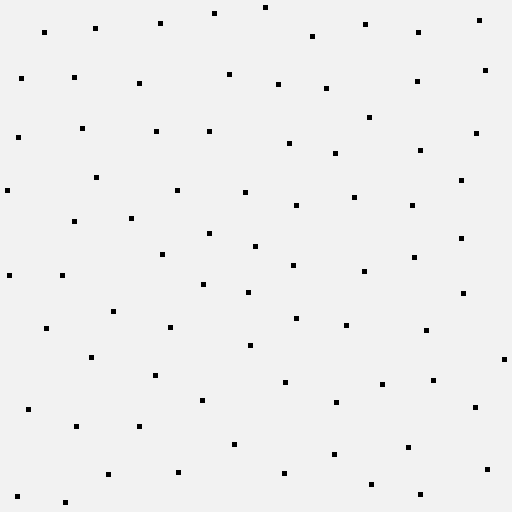}}
\hfill
\subfigure[Some examples of $\psi_x$.]
{\includegraphics[width=.32\textwidth]{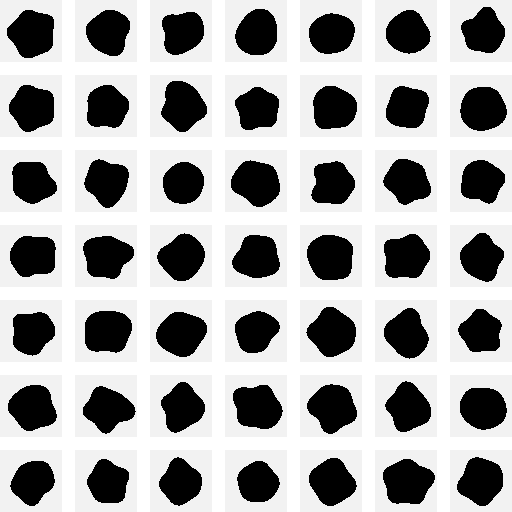}}
\hfill
\subfigure[$\sconv{\varphi}{\{\psi_x\}_{x\in\calX}}$]
{\includegraphics[width=.32\textwidth]{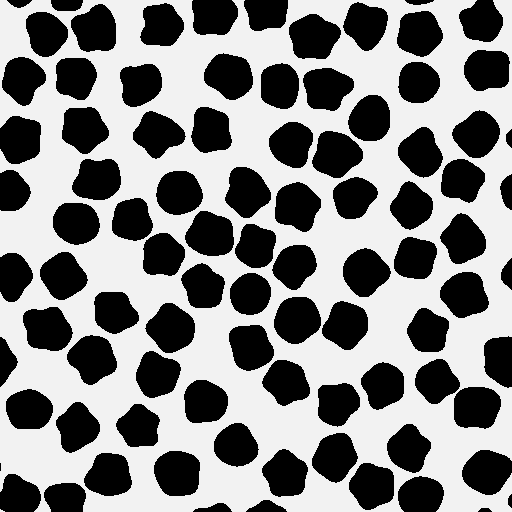}}
\\
\subfigure[$\varphi'$]
{\includegraphics[width=.32\textwidth]{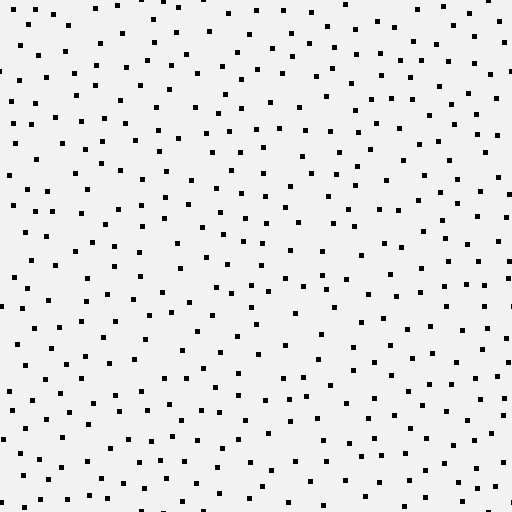}}
\hfill
\subfigure[Some examples of $\psi'_x$.]
{\includegraphics[width=.32\textwidth]{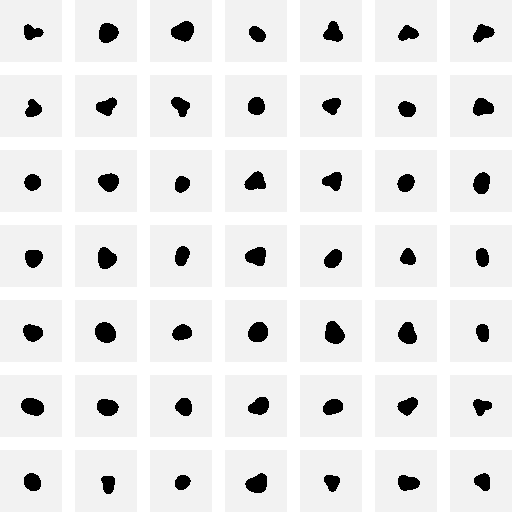}}
\hfill
\subfigure[$\sconv{\varphi'}{\{\psi'_x\}_{x\in\calX}}$]
{\includegraphics[width=.32\textwidth]{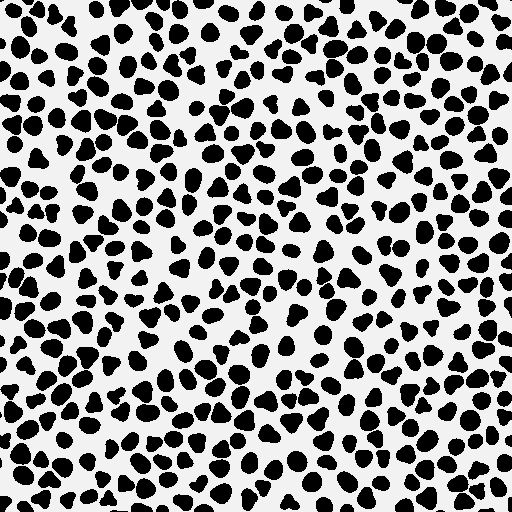}}
\end{center}
\caption{\label{figure.Expansion}Examples of the expansion operation~\eqref{equation.StarProduct}, where black denotes the value of 1, and the lighter shade denotes the value of 0.  A function $\varphi:\calX\rightarrow\set{0,1}$ is given in (a), and can be chosen, for example, via a sequence of $\abs{\calX}$ independent coin flips.  Meanwhile, for each $x\in\calX$, we pick a corresponding function $\psi_x:\calX\rightarrow\set{0,1}$.  Cropped versions of a few examples of such $\psi_x$'s are given in (b).The expansion $\sconv{\varphi}{\{\psi_x\}_{x\in\calX}}$ of $\varphi$ by $\{\psi_x\}_{x\in\calX}$ is given in (c).  Essentially, each point $x$ for which $\varphi(x)=1$ is replaced with the corresponding blob $\psi_x$, with the origin of the $\psi_x$ coordinates being translated to $x$.  In the second row, (f) shows the expansion of a second set of points $\varphi'$ by a second set of blobs $\{\psi'_x\}_{x\in\calX}$.  These examples notwithstanding, note that~\eqref{equation.StarProduct} does not require these blobs to be disjoint.  We could have, for instance, produced a texture by expanding the points in (d) by the blobs in (b).  Nevertheless, stronger conclusions can be made if such disjointness is enforced; see Theorem~\ref{theorem.star product}.}
\end{figure}
Note that expansion itself~\eqref{equation.StarProduct} is not an occlusion model.  Indeed, \eqref{equation.StarProduct} is but a way of combining functions in $\ell(\calX,\bbZ_2)$ to produce other ones, whereas an occlusion model is a random variable $\Phi$ defined by a probability density function $\rmP_\Phi$ over $\ell(\calX,\bbZ_2)$.  This fact notwithstanding, the expansion operation~\eqref{equation.StarProduct} on label functions $\varphi$ and $\{\psi_x\}_{x\in\calX}$ does in fact induce a parallel operation on their random variable cousins $\Phi$ and $\Psi$.  To be precise, given two occlusion models $\Phi$ and $\Psi$ from $\calX$ into $\bbZ_2$, we define the expansion of $\Phi$ by $\Psi$ to be the occlusion model $\sconv{\Phi}{\Psi}$ whose probability density function is $\rmP_{\sconv{\Phi}{\Psi}}:\ell(\calX,\bbZ_2)\rightarrow[0,1]$,
\begin{equation}
\label{equation.StarProductProbability}
\rmP_{\sconv{\Phi}{\Psi}}(\sigma)
:=\sum_{\substack{\varphi\in\ell(\calX,\bbZ_2)\\\{\psi_x\}_{x\in\calX}\in[\ell(\calX,\bbZ_2)]^\calX\\\sconv{\varphi}{\{\psi_x\}_{x\in\calX}}=\sigma}}\rmP_\Phi(\varphi)\prod_{x\in\calX}\rmP_\Psi(\psi_x).
\end{equation}
Note that the probability that $\sconv{\Phi}{\Psi}$ will produce a given label function $\sigma$ depends on the ways in which $\sigma$ can be written as $\sconv{\varphi}{\{\psi_x\}_{x\in\calX}}$ and, moreover, the probability that $\Phi$ and $\Psi$ will produce those particular $\varphi$'s and $\psi_x$'s, respectively.  In the next result, we verify that~\eqref{equation.StarProductProbability} indeed defines a probability density function on $\ell(\calX,\bbZ_2)$.  We further show that if $\Phi$ is translation-invariant~\eqref{equation.definition of translation invariance}, then $\sconv{\Phi}{\Psi}$ is translation-invariant which implies that $\sconv{\Phi}{\Psi}$ is flat by Theorem~\ref{theorem.PJ}.  In particular, image models which produce collections of blobs similar to those found in biological tissues will indeed be flat provided the distribution that produces the ``centers" of these blobs is translation-invariant.  Moreover, if the flatness of $\sconv{\Phi}{\Psi}$ is all that is desired, we can weaken the requirement that $\Phi$ be translation-invariant so as to only require that $\Phi$ is itself flat, provided $\Phi$ and $\Psi$ are \textit{effectively disjoint}:
\begin{equation}
\label{equation.definition of effective disjointness}
\text{If $\rmP_\Phi(\varphi)>0$ and $\rmP_\Psi(\psi_x)>0$ for all $x\in\calX$, then }\sconv{\varphi}{\{\psi_x\}_{x\in\calX}}=\sum_{\substack{x\in\calX\\\varphi(x)=1}}\rmT^{x}\psi_{x}.
\end{equation}
Put another way,~\eqref{equation.definition of effective disjointness} means that there is only at most one way, with nontrivial probability, in which the $x$ in~\eqref{equation.StarProduct} can be written as $x=x'+x''$ where both $\varphi(x')=1$ and $\psi_{x'}(x'')=1$.
\begin{thm}
\label{theorem.star product}
If $\Phi$ and $\Psi$ are occlusion models from $\calX$ into $\bbZ_2$, then their expansion $\sconv{\Phi}{\Psi}$, with probability density function~\eqref{equation.StarProductProbability}, is as well.  Moreover, if $\Phi$ is translation-invariant~\eqref{equation.definition of translation invariance}, then $\sconv{\Phi}{\Psi}$ is translation-invariant.  Furthermore, if $\Phi$ and $\Psi$ are effectively disjoint~\eqref{equation.definition of effective disjointness} and either $\Phi$ or $\Psi$ is flat~\eqref{equation.definition of flatness}, then $\sconv{\Phi}{\Psi}$ is flat.
\end{thm}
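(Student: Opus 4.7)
The plan is to handle the three claims in sequence, since each is essentially a direct manipulation of the definition~\eqref{equation.StarProductProbability}. For the first claim, summing $\rmP_{\sconv{\Phi}{\Psi}}(\sigma)$ over $\sigma\in\ell(\calX,\bbZ_2)$ amounts to dropping the constraint $\sconv{\varphi}{\{\psi_x\}}=\sigma$ (each pair $(\varphi,\{\psi_x\})$ produces exactly one $\sigma$), and then factoring the resulting unconstrained sum as $\bigparen{\sum_\varphi \rmP_\Phi(\varphi)}\prod_{x\in\calX}\bigparen{\sum_{\psi_x}\rmP_\Psi(\psi_x)}=1\cdot 1=1$.

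For the second claim, the key observation is a translation identity for expansion: for any $x\in\calX$, $\rmT^x\bigparen{\sconv{\varphi}{\{\psi_{x'}\}_{x'\in\calX}}}=\sconv{(\rmT^x\varphi)}{\{\psi_{x'-x}\}_{x'\in\calX}}$, which follows from a change of variables in~\eqref{equation.StarProduct}. This identity makes the map $(\varphi,\{\psi_{x'}\}_{x'\in\calX})\mapsto(\rmT^x\varphi,\{\psi_{x'-x}\}_{x'\in\calX})$ a bijection from the preimages of $\sigma$ under expansion onto the preimages of $\rmT^x\sigma$. The weight $\rmP_\Phi(\varphi)\prod_{x'}\rmP_\Psi(\psi_{x'})$ is preserved under this bijection: translation invariance of $\Phi$ preserves the $\rmP_\Phi$ factor, and reindexing $u=x'-x$ (a permutation of $\calX$) preserves $\prod_{x'}\rmP_\Psi(\psi_{x'-x})=\prod_u\rmP_\Psi(\psi_u)$. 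Summing the shared weights over either set of preimages gives $\rmP_{\sconv{\Phi}{\Psi}}(\rmT^x\sigma)=\rmP_{\sconv{\Phi}{\Psi}}(\sigma)$, i.e.~\eqref{equation.definition of translation invariance} for $\sconv{\Phi}{\Psi}$; flatness then follows from Theorem~\ref{theorem.PJ}.

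For the third claim, I would compute $\overline{1}_{\sconv{\Phi}{\Psi}}(x,1)$ directly from~\eqref{equation.ChiBar}. Expanding the definition of $\rmP_{\sconv{\Phi}{\Psi}}$ and collapsing the constraint as in claim one yields
\begin{equation*}
\overline{1}_{\sconv{\Phi}{\Psi}}(x,1)=\sum_\varphi\sum_{\{\psi_{x'}\}}\rmP_\Phi(\varphi)\Biggparen{\,\prod_{x'}\rmP_\Psi(\psi_{x'})}\,1_{\sconv{\varphi}{\{\psi_{x'}\}}}(x,1).
\end{equation*}
For every $(\varphi,\{\psi_{x'}\})$ contributing nontrivially, effective disjointness~\eqref{equation.definition of effective disjointness} lets us replace $1_{\sconv{\varphi}{\{\psi_{x'}\}}}(x,1)$ by the integer sum $\sum_{x''}1_\varphi(x'',1)\,1_{\psi_{x''}}(x-x'',1)$. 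Interchanging the order of summation and using that $\sum_{\psi_{x'}}\rmP_\Psi(\psi_{x'})=1$ for every $x'\neq x''$ collapses all but one of the $\psi$-factors, producing the convolution identity
\begin{equation*}
\overline{1}_{\sconv{\Phi}{\Psi}}(x,1)=\sum_{x''\in\calX}\overline{1}_\Phi(x'',1)\,\overline{1}_\Psi(x-x'',1).
\end{equation*}
If either $\Phi$ or $\Psi$ is flat, the corresponding factor is constant in its spatial argument, and the convolution reduces to that constant times $\sum_{u}\overline{1}_{\cdot}(u,1)$, which is independent of $x$. Combined with $\overline{1}_{\sconv{\Phi}{\Psi}}(x,0)=1-\overline{1}_{\sconv{\Phi}{\Psi}}(x,1)$ from~\eqref{equation.expected value of local histograms of occluded images}, both levels are constant in $x$, giving flatness of $\sconv{\Phi}{\Psi}$.

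The main obstacle I anticipate is the bookkeeping in the third claim: applying effective disjointness requires us to invoke~\eqref{equation.definition of effective disjointness} only on the set of probability-nonzero configurations, and then to carefully separate the $\rmP_\Phi$ and the coupled $\prod_{x'}\rmP_\Psi$ factors via Fubini so that the irrelevant $\psi_{x'}$-sums (for $x'\neq x''$) collapse cleanly to $1$ and leave the desired convolution structure.
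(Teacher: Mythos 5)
Your proposal is correct and follows essentially the same route as the paper's proof: the unconstrained-sum argument for normalization, the translation identity and weight-preserving reindexing for translation-invariance, and the reduction of $\overline{1}_{\sconv{\Phi}{\Psi}}(\cdot,1)$ to the convolution $\conv{\overline{1}_\Phi(\cdot,1)}{\overline{1}_\Psi(\cdot,1)}$ via effective disjointness, which is constant when either factor is. The only cosmetic difference is that the paper packages $\overline{1}_\Phi(\cdot,1)$ as $\overline\Phi:=\sum_\varphi\rmP_\Phi(\varphi)\varphi$ before deriving the same identity $\overline{\sconv{\Phi}{\Psi}}=\conv{\overline\Phi}{\overline\Psi}$.
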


\begin{proof}
We first show that~\eqref{equation.StarProductProbability} defines a probability density function, namely that values of $\rmP_{\sconv{\Phi}{\Psi}}(\sigma)$ over all $\sigma$ in $\ell(\calX,\bbZ_2)$ sum to $1$.  Since $\rmP_\Phi$ is a probability density function by assumption, we have:
\begin{equation}
\label{equation.proof of star product 1}
1=\sum_{\varphi\in\ell(\calX,\bbZ_2)}\rmP_\Phi(\varphi).
\end{equation}
Similarly, for any fixed $x\in\calX$, we have:
\begin{equation}
\label{equation.proof of star product 2}
1=\sum_{\psi_x\in\ell(\calX,\bbZ_2)}\rmP_\Psi(\psi_x),
\end{equation}
where the subscript ``$x$" on $\psi$ indicates that this particular $\psi$ is intended to expand $\varphi$ at the particular point $x$ as opposed to at some other point.  Taking the product of~\eqref{equation.proof of star product 1} with the product of~\eqref{equation.proof of star product 2} over all $x$ yields:
\begin{equation}
\label{equation.proof of star product 3}
1
=1(1)^{\abs{\calX}}
=\sum_{\varphi\in\ell(\calX,\bbZ_2)}\rmP_\Phi(\varphi)\prod_{x\in\calX}\,\,\sum_{\psi_x\in\ell(\calX,\bbZ_2)}\rmP_\Psi(\psi_x)
=\sum_{\substack{\varphi\in\ell(\calX,\bbZ_2)\\\{\psi_x\}_{x\in\calX}\in[\ell(\calX,\bbZ_2)]^\calX}}\rmP_\Phi(\varphi)\prod_{x\in\calX}\rmP_\Psi(\psi_x),
\end{equation}
where the final quantity in~\eqref{equation.proof of star product 3} contains all of the cross terms resulting from distributing the product over all sums of the form~\eqref{equation.proof of star product 2}.  Now, since for each choice of $\varphi$ and $\{\psi_x\}_{x\in\calX}$ there is exactly one resulting $\sigma=\sconv{\varphi}{\{\psi_x\}_{x\in\calX}}$, we can rewrite~\eqref{equation.proof of star product 3} in terms of the definition~\eqref{equation.StarProductProbability} of $\rmP_{\sconv{\Phi}{\Psi}}$, obtaining our claim:
\begin{equation*}
1
=\sum_{\sigma\in\ell(\calX,\bbZ_2)}\sum_{\substack{\varphi\in\ell(\calX,\bbZ_2)\\\{\psi_x\}_{x\in\calX}\in[\ell(\calX,\bbZ_2)]^\calX\\\sconv{\varphi}{\{\psi_x\}_{x\in\calX}}=\sigma}}\rmP_\Phi(\varphi)\prod_{x\in\calX}\rmP_\Psi(\psi_x)
=\sum_{\sigma\in\ell(\calX,\bbZ_2)}\rmP_{\sconv{\Phi}{\Psi}}(\sigma).
\end{equation*}
Thus, \eqref{equation.StarProductProbability} indeed defines a probability density function, as claimed.

We next show that the occlusion model $\sconv{\Phi}{\Psi}$ is translation-invariant, if $\Phi$ is translation-invariant.  To do this, we claim that if $\rmT^{\tilde{x}}\sigma=\sconv{\varphi}{\{\psi_x\}_{x\in\calX}}$ then $\sigma=\sconv{(\rmT^{-\tilde{x}}\varphi)}{\{\psi_{x+\tilde{x}}\}_{x\in\calX}}$.  To see this claim, note that 
\begin{equation*}
\sigma(x-\tilde{x})=(\rmT^{\tilde{x}}\sigma)(x)=(\sconv{\varphi}{\{\psi_{x'}\}_{x'\in\calX}})(x)=1
\end{equation*}
if and only if there exists some $x'$, $x''$ in $\calX$ such that $x=x'+x''$, $\varphi(x')=1$, and $\psi_{x'}(x'')=1$.  Letting $\hat{x}=x-\tilde{x}$, we thus have that $\sigma(\hat{x})=1$ if and only if $\hat{x}=(x'-\tilde{x})+x''$, where $(\rmT^{-\tilde{x}}\varphi)(x'-\tilde{x})=\varphi(x'-\tilde{x}+\tilde{x})=\varphi(x')=1$ and $\psi_{(x'-\tilde{x})+\tilde{x}}(x'')=\psi_{x'}(x'')=1$, implying $\sigma=\sconv{(\rmT^{-\tilde{x}}\varphi)}{\{\psi_{x+\tilde{x}}\}_{x\in\calX}}$, as claimed.  Having the claim,~\eqref{equation.StarProductProbability} implies:
\begin{equation*}
\rmP_{\sconv{\Phi}{\Psi}}(\rmT^{\tilde{x}}\sigma)
=\sum_{\substack{\varphi\in\ell(\calX,\bbZ_2)\\\{\psi_x\}_{x\in\calX}\in[\ell(\calX,\bbZ_2)]^\calX\\\sconv{\varphi}{\{\psi_x\}_{x\in\calX}}=\rmT^{\tilde{x}}\sigma}}\rmP_\Phi(\varphi)\prod_{x\in\calX}\rmP_\Psi(\psi_x)
=\sum_{\substack{\varphi\in\ell(\calX,\bbZ_2)\\\{\psi_x\}_{x\in\calX}\in[\ell(\calX,\bbZ_2)]^\calX\\\sconv{(\rmT^{-\tilde{x}}\varphi)}{\{\psi_{x+\tilde{x}}\}_{x\in\calX}}=\sigma}}\rmP_\Phi(\varphi)\prod_{x\in\calX}\rmP_\Psi(\psi_x).
\end{equation*}
To continue, we make the change of variables $\varphi':=\rmT^{-\tilde{x}}\varphi$ and $\psi'_x:=\psi_{x+\tilde{x}}$:
\begin{equation*}
\rmP_{\sconv{\Phi}{\Psi}}(\rmT^{\tilde{x}}\sigma)
=\sum_{\substack{\varphi'\in\ell(\calX,\bbZ_2)\\\{\psi'_x\}_{x\in\calX}\in[\ell(\calX,\bbZ_2)]^\calX\\\sconv{(\varphi')}{\{\psi'_x\}_{x\in\calX}}=\sigma}}\rmP_\Phi(\rmT^{\tilde{x}}\varphi')\prod_{x\in\calX}\rmP_\Psi(\psi'_{x-\tilde{x}}).
\end{equation*}
Since $\Phi$ is translation-invariant and $\displaystyle\prod_{x\in\calX}\rmP_\Psi(\psi'_{x-\tilde{x}})=\prod_{x\in\calX}\rmP_\Psi(\psi'_{x})$, we have:
\begin{equation*}
\rmP_{\sconv{\Phi}{\Psi}}(\rmT^{\tilde{x}}\sigma)
=\sum_{\substack{\varphi'\in\ell(\calX,\bbZ_2)\\\{\psi'_x\}_{x\in\calX}\in[\ell(\calX,\bbZ_2)]^\calX\\\sconv{(\varphi')}{\{\psi'_x\}_{x\in\calX}}=\sigma}}\rmP_\Phi(\varphi')\prod_{x\in\calX}\rmP_\Psi(\psi'_{x})
=\rmP_{\sconv{\Phi}{\Psi}}(\sigma),
\end{equation*}
and so $\sconv{\Phi}{\Psi}$ is indeed translation-invariant~\eqref{equation.definition of translation invariance}, as claimed.

For our final claim, we assume that $\Phi$ and $\Psi$ are effectively disjoint~\eqref{equation.definition of effective disjointness} and that either $\Phi$ or $\Psi$ is flat.  To do so, it is helpful to characterize the flatness of an arbitrary occlusion model $\Phi$ from $\calX$ to $\bbZ_2$ in terms of the corresponding function $\overline\Phi:=\sum_{\varphi\in\ell(\calX,\bbZ_2)}\rmP_\Phi(\varphi)\varphi$.  Indeed, for any $\varphi:\calX\rightarrow\bbZ_2$, \eqref{equation.CharFunction} may be rewritten as $1_\varphi(x,1)=\varphi(x)$ and so:
\begin{equation}
\label{equation.proof of star product 4}
\overline1_\Phi(x,1)
=\sum_{\varphi\in\ell(\calX,\bbZ_2)}\rmP_\Phi(\varphi)1_\varphi(x,1)
=\sum_{\varphi\in\ell(\calX,\bbZ_2)}\rmP_\Phi(\varphi)\varphi(x)
=\overline\Phi(x).
\end{equation}

In light of~\eqref{equation.proof of star product 4}, we claim that $\Phi$ is flat if and only if $\overline\Phi$ is constant.  Indeed, if $\Phi$ is flat, then there exists $\lambda_1$ such that $\overline\Phi(x)=\overline1_\Phi(x,1)=\lambda_1$ for all $x\in\calX$.  Conversely, if $\overline\Phi(x)$ is constant, then there exists $\lambda_1$ such that $\overline1_\Phi(x,1)=\overline\Phi(x)=\lambda_1$ for all $x\in\calX$; by \eqref{equation.expected value of local histograms of occluded images}, this further implies that $\overline1_\Phi(x,0)=1-\overline1_\Phi(x,1)=1-\lambda_1$ for all $x\in\calX$ and so $\Phi$ is flat.

Having this claim, we show that $\sconv{\Phi}{\Psi}$ is flat by showing that $\overline{\sconv{\Phi}{\Psi}}$ is constant.  To do this, we show that if $\Phi$ and $\Psi$ are effectively disjoint then $\overline{\sconv{\Phi}{\Psi}}=\conv{\overline\Phi}{\overline\Psi}$ where ``$*$" denotes standard convolution over $\calX$.  According to the definition of $\sconv{\Phi}{\Psi}$~\eqref{equation.StarProductProbability} we have:
\begin{equation}
\label{equation.proof of star product 5}
\overline{\sconv{\Phi}{\Psi}}
=\sum_{\sigma\in\ell(\calX,\bbZ_2)}\rmP_{\sconv{\Phi}{\Psi}}(\sigma)\sigma
=\sum_{\sigma\in\ell(\calX,\bbZ_2)}\sum_{\substack{\varphi\in\ell(\calX,\bbZ_2)\\\{\psi_x\}_{x\in\calX}\in[\ell(\calX,\bbZ_2)]^\calX\\\sconv{\varphi}{\{\psi_x\}_{x\in\calX}}=\sigma}}\rmP_\Phi(\varphi)\Biggparen{\prod_{x\in\calX}\rmP_\Psi(\psi_x)}(\sconv{\varphi}{\{\psi_x\}_{x\in\calX}}).
\end{equation}
Since any particular choice of $\varphi$ and $\set{\psi_x}_{x\in\calX}$ produces a unique $\sigma$ via $\star$ we can simplify~\eqref{equation.proof of star product 5} to
\begin{equation}
\label{equation.proof of star product 6}
\overline{\sconv{\Phi}{\Psi}}
=\sum_{\substack{\varphi\in\ell(\calX,\bbZ_2)\\\{\psi_x\}_{x\in\calX}\in[\ell(\calX,\bbZ_2)]^\calX}}\rmP_\Phi(\varphi)\Biggparen{\prod_{x\in\calX}\rmP_\Psi(\psi_x)}(\sconv{\varphi}{\{\psi_x\}_{x\in\calX}}).
\end{equation}
Moreover, since $\Phi$ and $\Psi$ are effectively disjoint~\eqref{equation.definition of effective disjointness} we have $\displaystyle\sconv{\varphi}{\{\psi_x\}_{x\in\calX}}=\sum_{\substack{x'\in\calX\\\varphi(x')=1}}\rmT^{x'}\psi_{x'}$ meaning \eqref{equation.proof of star product 6} becomes:
\begin{align}
\nonumber
\overline{\sconv{\Phi}{\Psi}}
&=\sum_{\substack{\varphi\in\ell(\calX,\bbZ_2)\\\{\psi_x\}_{x\in\calX}\in[\ell(\calX,\bbZ_2)]^\calX}}\rmP_\Phi(\varphi)\Biggparen{\prod_{x\in\calX}\rmP_\Psi(\psi_x)}\Biggparen{\sum_{\substack{x'\in\calX\\\varphi(x')=1}}\rmT^{x'}\psi_{x'}}\\
\label{equation.proof of star product 7}
&=\sum_{\varphi\in\ell(\calX,\bbZ_2)}\rmP_\Phi(\varphi)\sum_{\substack{x'\in\calX\\\varphi(x')=1}}\rmT^{x'}\Biggbracket{\sum_{\{\psi_x\}_{x\in\calX}\in[\ell(\calX,\bbZ_2)]^\calX}\Biggparen{\prod_{x\in\calX}\rmP_\Psi(\psi_x)}\psi_{x'}}.
\end{align}
Now, for any fixed $x'\in\calX$ such that $\varphi(x')=1$, we factor the corresponding innermost sum in~\eqref{equation.proof of star product 7} into a product of $\abs{\calX}$ distinct sums---one for each $x\in\calX$---to obtain:
\begin{equation}
\label{equation.proof of star product 8}
\sum_{\{\psi_x\}_{x\in\calX}\in[\ell(\calX,\bbZ_2)]^\calX}\Biggparen{\prod_{x\in\calX}\rmP_\Psi(\psi_x)}\psi_{x'}
=\Biggbracket{\prod_{x\neq x'}\Biggparen{\sum_{\psi_x\in\ell(\calX,\bbZ_2)}\rmP_\Psi(\psi_x)}}\sum_{\psi_{x'}\in\ell(\calX,\bbZ_2)}\rmP_\Psi(\psi_{x'})\psi_{x'}
=\Biggparen{\prod_{x\neq x'}1}\overline\Psi
=\overline\Psi.
\end{equation}
Substituting \eqref{equation.proof of star product 8} into \eqref{equation.proof of star product 7} then gives:
\begin{equation*}
\overline{\sconv{\Phi}{\Psi}}
=\sum_{\varphi\in\ell(\calX,\bbZ_2)}\rmP_\Phi(\varphi)\sum_{\substack{x'\in\calX\\\varphi(x')=1}}\rmT^{x'}\overline\Psi
=\sum_{\varphi\in\ell(\calX,\bbZ_2)}\rmP_\Phi(\varphi)\conv{\Biggparen{\sum_{\substack{x'\in\calX\\\varphi(x')=1}}\delta_{x'}}}{\overline\Psi}
=\conv{\Biggparen{\sum_{\varphi\in\ell(\calX,\bbZ_2)}\rmP_\Phi(\varphi)\varphi}}{\overline\Psi}
=\conv{\overline\Phi}{\overline\Psi}.
\end{equation*}
Thus, the effective disjointness of $\Phi$ and $\Psi$ indeed implies $\overline{\sconv{\Phi}{\Psi}}=\conv{\overline\Phi}{\overline\Psi}$.  As such, if we further assume that either $\Phi$ or $\Psi$ is flat, then either $\overline\Phi$ or $\overline\Psi$ is constant, implying in either case that $\overline{\sconv{\Phi}{\Psi}}$ is constant and so $\sconv{\Phi}{\Psi}$ is flat.
\end{proof}


\subsection{Overlay}

Above, we discussed how the expansion~\eqref{equation.StarProductProbability} of a binary-valued occlusion model $\Phi$ with another such model $\Psi$ is a new model $\sconv{\Phi}{\Psi}$ that randomly generates label functions of the form $\sigma=\sconv{\varphi}{\{\psi_x\}_{x\in\calX}}$ as defined in~\eqref{equation.StarProduct}.  Under certain hypotheses, Theorem~\ref{theorem.star product} gives that such models $\sconv{\Phi}{\Psi}$ are flat, meaning their local histograms can be understood in terms of Theorem~\ref{theorem.flatness}.  Moreover, some examples of these models produce textures that resemble those encountered in histological tissues: if $ f_0$ and $ f_1$ are roughly constant light purple and dark purple fields, respectively, then the composite image $\occ{\varphi}\set{ f_0, f_1}$ obtained by picking $\varphi$ as in Figure~\ref{figure.Expansion}(f) bears some similarity to an actual image of cartilage, such as the one given in Figure~\ref{figure.histograms}(b).  Taken together, these facts provide some theoretical justification for the use of local histograms for the analysis of such tissues.

There is however a deficit with this theory: due to the nature of the construction~\eqref{equation.StarProduct}, models produced by expansion~\eqref{equation.StarProductProbability} can only be binary-valued, and as such are insufficient to emulate textures that exhibit three or more distinct color modes, such as the pseudovascular tissue depicted in Figure~\ref{figure.histograms}(d).  In this subsection, we discuss a method for \textit{laying} one occlusion model over another which, amongst other things, permits us to build multivalued models out of binary-valued ones.  To be precise, for any $\varphi\in\ell(\calX,\bbZ_{N_{\varphi}})$, $\psi\in\ell(\calX,\bbZ_{N_{\psi}})$ and $\sigma\in\ell(\calX,\bbZ_2)$, we define the \textit{overlay} of $\varphi$ over $\psi$ with respect to $\sigma$ to be $\varphi\#_{\sigma}\psi\in\ell(\calX,\bbZ_{N_{\varphi}+N_{\psi}})$,
\begin{equation}
\label{equation.definition of overlay}
(\varphi\#_{\sigma}\psi)(x)
:=\left\{\begin{array}{ll}\varphi(x),&\sigma(x)=0,\\\psi(x)+N_{\varphi},&\sigma(x)=1.\end{array}\right.
\end{equation}
Essentially, an overlay~\eqref{equation.definition of overlay} is the result of cutting holes out of an image of $\varphi$ and laying it on top of an image of $\psi$; the location of these holes is indicated by $\sigma$ and the values of $\psi$ are increased by a factor of $N_\varphi$ so that they cannot be confused with those of $\varphi$.  Examples of this overlay operation are given in Figure~\ref{figure.Overlay}.

\begin{figure}[t]
\begin{center}
\subfigure[$\sigma$]
{\includegraphics[width=.33\textwidth]{6c.png}}
\hspace{.045\textwidth}
\subfigure[$0\#_{\sigma}\sigma'$]
{\includegraphics[width=.33\textwidth]{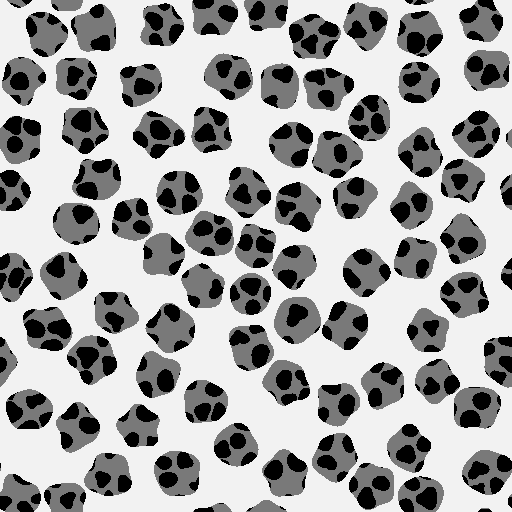}}
\\
\subfigure[$\sigma'$]
{\includegraphics[width=.33\textwidth]{6f.png}}
\hspace{.045\textwidth}
\subfigure[$\sigma\#_{\sigma'}0$]
{\includegraphics[width=.33\textwidth]{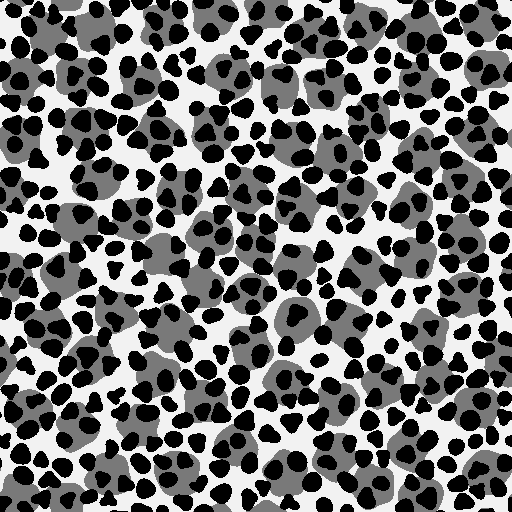}}
\end{center}
\caption{\label{figure.Overlay}Two examples of the overlay operation~\eqref{equation.definition of overlay}.  Recall the two $\set{0,1}$-valued label functions $\sigma=\sconv{\varphi}{\{\psi_x\}_{x\in\calX}}$ and $\sigma'=\sconv{\varphi'}{\{\psi'_x\}_{x\in\calX}}$ of Figure~\ref{figure.Expansion}(c) and (f) reshown here in (a) and (c), respectively.  Further consider a constant function $0:\calX\rightarrow\bbZ_1$ that assigns label $0$ to every point in $x$.  The overlay~\eqref{equation.definition of overlay} of $0$ over $\sigma'$ is given in (b); essentially, $\sigma$-shaped holes are cut from $0$ and the result is laid over $\sigma'$, resulting in a new texture.  A distinct texture can be produced by cutting $\sigma'$-shaped holes out from $\sigma$ and laying the result over the constant function $0$ (d).  Overlaying the resulting textures with each other can produce even more complicated textures.}
\end{figure}
In a manner similar to the relationship between~\eqref{equation.StarProduct} and~\eqref{equation.StarProductProbability}, we have that~\eqref{equation.definition of overlay} naturally induces a parallel operation on occlusion models: given probability density functions $\rmP_\Phi$, $\rmP_\Psi$ and $\rmP_\Sigma$ on $\ell(X,\bbZ_{N_\Phi})$, $\ell(X,\bbZ_{N_\Psi})$ and $\ell(X,\bbZ_2)$, respectively, we define the overlay of the occlusion model $\Phi$ over $\Psi$ with respect to $\Sigma$ to be the new occlusion model $\Phi\#_{\Sigma}\Psi$ whose probability density function is $\rmP_{\Phi\#_\Sigma\Psi}:\ell(\calX,\bbZ_{N_{\varphi}+N_{\psi}})\rightarrow[0,1]$,
\begin{equation}
\label{equation.ProbOfPoundProduct}
\rmP_{\Phi\#_\Sigma\Psi}(\upsilon)
:=\sum_{\substack{\varphi\in\ell(\calX,\bbZ_{N_{\varphi}})\\\psi\in\ell(\calX,\bbZ_{N_{\psi}})\\\sigma\in\ell(\calX,\bbZ_2)\\\varphi\#_{\sigma}\psi=\upsilon}}\rmP_\Phi(\varphi)\rmP_\Psi(\psi)\rmP_\Sigma(\sigma).
\end{equation}
In the next result, we verify that~\eqref{equation.ProbOfPoundProduct} indeed defines a probability density function, and moreover that the corresponding model $\Phi\#_{\Sigma}\Psi$ is flat provided $\Phi$, $\Psi$ and $\Sigma$ are flat, meaning that the local histograms~\eqref{equation.definition of local histogram} of composite images~\eqref{equation.definition of occlusion} produced by such a model will behave according to Theorem~\ref{theorem.flatness}.

\begin{thm}
\label{theorem.overlay}
If $\Phi$, $\Psi$ and $\Sigma$ are occlusion models on $\ell(X,\bbZ_{N_\Phi})$, $\ell(X,\bbZ_{N_\Psi})$ and $\ell(X,\bbZ_2)$, respectively, then~\eqref{equation.ProbOfPoundProduct} defines a probability density function on $\ell(\calX,\bbZ_{N_\varphi+N_\psi})$.  Moreover,  if $\Phi$, $\Psi$, and $\Sigma$ are flat, then $\Phi\#_\Sigma\Psi$ is flat.
\end{thm}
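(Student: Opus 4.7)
The plan is to verify the two claims in Theorem~\ref{theorem.overlay} in turn, following the template of the proof of Theorem~\ref{theorem.star product}. For the first claim, I would observe that each triple $(\varphi,\psi,\sigma)$ in $\ell(\calX,\bbZ_{N_\Phi})\times\ell(\calX,\bbZ_{N_\Psi})\times\ell(\calX,\bbZ_2)$ determines exactly one $\upsilon=\varphi\#_\sigma\psi$ via \eqref{equation.definition of overlay}. Consequently, summing \eqref{equation.ProbOfPoundProduct} over all $\upsilon\in\ell(\calX,\bbZ_{N_\Phi+N_\Psi})$ collapses to a single unrestricted sum over all triples, which factors as the product of $\sum_\varphi\rmP_\Phi(\varphi)$, $\sum_\psi\rmP_\Psi(\psi)$ and $\sum_\sigma\rmP_\Sigma(\sigma)$, each of which equals $1$. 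Nonnegativity of $\rmP_{\Phi\#_\Sigma\Psi}$ is then immediate from the nonnegativity of the three factors.

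For the second claim, I would compute $\overline1_{\Phi\#_\Sigma\Psi}(x,k)$ from \eqref{equation.ChiBar} and show that it is constant in $x$ for every $k\in\bbZ_{N_\Phi+N_\Psi}$. Substituting \eqref{equation.ProbOfPoundProduct} and once again using that each triple produces a unique $\upsilon$, we arrive at
\begin{equation*}
\overline1_{\Phi\#_\Sigma\Psi}(x,k)
=\sum_{\substack{\varphi,\psi,\sigma\\ (\varphi\#_\sigma\psi)(x)=k}}\rmP_\Phi(\varphi)\rmP_\Psi(\psi)\rmP_\Sigma(\sigma).
\end{equation*}

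The heart of the argument is then a case split dictated by \eqref{equation.definition of overlay}. When $k<N_\Phi$, the constraint $(\varphi\#_\sigma\psi)(x)=k$ is equivalent to the conjunction of $\sigma(x)=0$ and $\varphi(x)=k$, with $\psi$ unconstrained; when $k\geq N_\Phi$, it is equivalent to $\sigma(x)=1$ and $\psi(x)=k-N_\Phi$, with $\varphi$ unconstrained. In either case, the triple sum factors into three independent sums---one unrestricted sum that equals $1$, together with two marginals of the form $\overline1_\Phi(x,k)\,\overline1_\Sigma(x,0)$ or $\overline1_\Psi(x,k-N_\Phi)\,\overline1_\Sigma(x,1)$. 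By the hypothesized flatness of $\Phi$, $\Psi$ and $\Sigma$, each of these marginals is constant in $x$, and hence so is $\overline1_{\Phi\#_\Sigma\Psi}(x,k)$, proving that $\Phi\#_\Sigma\Psi$ is flat.

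I do not anticipate a genuine obstacle here. In contrast to Theorem~\ref{theorem.star product}, no analogue of ``effective disjointness'' is required, since the overlay rule \eqref{equation.definition of overlay} at a pixel $x$ depends only on the single values $\varphi(x)$, $\psi(x)$ and $\sigma(x)$---and not on any other pixels---so the three marginal constraints decouple cleanly at $x$ and the flatness hypotheses on $\Phi$, $\Psi$ and $\Sigma$ transfer directly to $\Phi\#_\Sigma\Psi$. The main care needed is simply bookkeeping for the index shift by $N_\Phi$ when $k\geq N_\Phi$.
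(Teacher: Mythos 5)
Your proposal is correct and follows essentially the same route as the paper's proof: the bijection between triples $(\varphi,\psi,\sigma)$ and outputs $\upsilon$ gives normalization, and the case split on $k<N_\Phi$ versus $k\geq N_\Phi$ factors the constrained triple sum into products of marginals that are constant in $x$ by flatness. Your closing observation about why no disjointness hypothesis is needed is a nice touch, but the argument itself matches the paper's.
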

\begin{proof}
To show that \eqref{equation.ProbOfPoundProduct} defines a probability density function on $\ell(\calX,\bbZ_{N_\varphi+N_\psi})$, note that:
\begin{equation}
\label{equation.proof of overlay 1}
1
=(1)(1)(1)
=\Biggparen{\,\sum_{\varphi\in\ell(\calX,\bbZ_{N_\varphi})}\rmP_\Phi(\varphi)}\Biggparen{\,\sum_{\psi\in\ell(\calX,\bbZ_{N_\psi})}\rmP_\Psi(\psi)}\Biggparen{\,\sum_{\sigma\in\ell(\calX,\bbZ_2)}\rmP_\Sigma(\sigma)}
=\sum_{\substack{\varphi\in\ell(\calX,\bbZ_{N_{\varphi}})\\\psi\in\ell(\calX,\bbZ_{N_{\psi}})\\\sigma\in\ell(\calX,\bbZ_2)}}\rmP_\Phi(\varphi)\rmP_\Psi(\psi)\rmP_\Sigma(\sigma).
\end{equation}
Noting that for each fixed $\varphi$, $\psi$, and $\sigma$, there exists exactly one $\upsilon\in\ell(\calX,\bbZ_{N_\varphi+N_\psi})$ such that $\varphi\#_\sigma\psi=\upsilon$, \eqref{equation.proof of overlay 1} becomes:
\begin{equation*}
1
=\sum_{\upsilon\in\ell(\calX,\bbZ_{N_\varphi+N_\psi})}\sum_{\substack{\varphi\in\ell(\calX,\bbZ_{N_{\varphi}})\\\psi\in\ell(\calX,\bbZ_{N_{\psi}})\\\sigma\in\ell(\calX,\bbZ_2)\\\varphi\#_\sigma\psi=\upsilon}}\rmP_\Phi(\varphi)\rmP_\Psi(\psi)\rmP_\Sigma(\sigma)
=\sum_{\upsilon\in\ell(\calX,\bbZ_{N_\varphi+N_\psi})}\rmP_{\Phi\#_\Sigma\Psi}(\upsilon),
\end{equation*}
as claimed.  For the second conclusion, assume that $\Phi$, $\Psi$, and $\Sigma$ are flat.  Our goal is to show that $\Phi\#_\Sigma\Psi$ is flat~\eqref{equation.definition of flatness}, meaning that for any $n\in\bbZ_{N_\varphi+N_\psi}$, we want to show that there exists a scalar $\lambda_n$ such that:
\begin{equation}
\label{equation.proof of overlay 2}
\sum_{\substack{\upsilon\in\ell(\calX,\bbZ_{N_\varphi+N_\psi})\\\upsilon(x)=n}}\rmP_{\Phi\#_\Sigma\Psi}(\upsilon)
=\lambda_n
\end{equation}
for all $x\in\calX$.   To see this, note that for any such $x$ and $n$, we have:
\begin{equation}
\label{equation.proof of overlay 3}
\sum_{\substack{\upsilon\in\ell(\calX,\bbZ_{N_\varphi+N_\psi})\\\upsilon(x)=n}}\rmP_{\Phi\#_\Sigma\Psi}(\upsilon)
=\sum_{\substack{\upsilon\in\ell(\calX,\bbZ_{N_\varphi+N_\psi})\\\upsilon(x)=n}}\sum_{\substack{\varphi\in\ell(\calX,\bbZ_{N_\varphi})\\\psi\in\ell(\calX,\bbZ_{N_\psi})\\\sigma\in\ell(\calX,\bbZ_2)\\\varphi\#_\sigma\psi=\upsilon}}\rmP_\Phi(\varphi)\rmP_\Psi(\psi)\rmP_\Sigma(\sigma)
=\sum_{\substack{\varphi\in\ell(\calX,\bbZ_{N_\varphi})\\\psi\in\ell(\calX,\bbZ_{N_\psi})\\\sigma\in\ell(\calX,\bbZ_2)\\(\varphi\#_\sigma\psi)(x)=n}}\rmP_\Phi(\varphi)\rmP_\Psi(\psi)\rmP_\Sigma(\sigma).
\end{equation}
Now, in the special case where $n=0,\ldots,N_\varphi-1$, \eqref{equation.definition of overlay} gives that $(\varphi\#_\sigma\psi)(x)=n$ if and only if $\varphi(x)=n$ and $\sigma(x)=0$.  As such, in this case \eqref{equation.proof of overlay 3} becomes:
\begin{align}
\nonumber
\sum_{\substack{\upsilon\in\ell(\calX,\bbZ_{N_\varphi+N_\psi})\\\upsilon(x)=n}}\rmP_{\Phi\#_\Sigma\Psi}(\upsilon)
&=\sum_{\substack{\varphi\in\ell(\calX,\bbZ_{N_\varphi}),\varphi(x)=n\\\psi\in\ell(\calX,\bbZ_{N_\psi})\\\sigma\in\ell(\calX,\bbZ_2),\sigma(x)=0}}\rmP_\Phi(\varphi)\rmP_\Psi(\psi)\rmP_\Sigma(\sigma)\\
\nonumber
&=\Biggparen{\,\sum_{\substack{\varphi\in\ell(\calX,\bbZ_{N_\varphi})\\\varphi(x)=n}}\rmP_\Phi(\varphi)}\Biggparen{\,\sum_{\psi\in\ell(\calX,\bbZ_{N_\psi})}\rmP_\Psi(\psi)}\Biggparen{\,\sum_{\substack{\sigma\in\ell(\calX,\bbZ_2)\\\sigma(x)=0}}\rmP_\Sigma(\sigma)}\\
\label{equation.proof of overlay 4}
&=\lambda_{\Phi,n}\lambda_{\Sigma,0}.
\end{align}
If, on the other hand $n=N_\varphi,\ldots,N_\varphi+N_\psi-1$ then \eqref{equation.definition of overlay} gives that $(\varphi\#_\sigma\psi)(x)=n$ if and only if $\psi(x)=n-N_\varphi$ and $\sigma(x)=1$.  In this case, \eqref{equation.proof of overlay 3} becomes:
\begin{align}
\nonumber
\sum_{\substack{\upsilon\in\ell(\calX,\bbZ_{N_\varphi+N_\psi})\\\upsilon(x)=n}}\rmP_{\Phi\#_\Sigma\Psi}(\upsilon)
&=\sum_{\substack{\varphi\in\ell(\calX,\bbZ_{N_\varphi})\\\psi\in\ell(\calX,\bbZ_{N_\psi}),\psi(x)=n-N_\varphi\\\sigma\in\ell(\calX,\bbZ_2),\sigma(x)=1}}\rmP_\Phi(\varphi)\rmP_\Psi(\psi)\rmP_\Sigma(\sigma)\\
\nonumber
&=\Biggparen{\,\sum_{\varphi\in\ell(\calX,\bbZ_{N_\varphi})}\rmP_\Phi(\varphi)}\Biggparen{\,\sum_{\substack{\psi\in\ell(\calX,\bbZ_{N_\psi})\\\psi(x)=n-N_\varphi}}\rmP_\Psi(\psi)}\Biggparen{\,\sum_{\substack{\sigma\in\ell(\calX,\bbZ_2)\\\sigma(x)=1}}\rmP_\Sigma(\sigma)}\\
\label{equation.proof of overlay 5}
&=\lambda_{\Psi,n-N_\varphi}\lambda_{\Sigma,1}.
\end{align}
Thus, for any $x\in\calX$ we either have~\eqref{equation.proof of overlay 4} or~\eqref{equation.proof of overlay 5} meaning $\Phi\#_\Sigma\Psi$ is flat~\eqref{equation.proof of overlay 2}, as claimed.
\end{proof}


\section{A local histogram-based segmentation-and-classification algorithm}
\label{section.MajorVeinOne}

In this section, we present a proof-of-concept segmentation-and-classification scheme that is inspired by Theorem~\ref{theorem.flatness}.  We emphasize that for the algorithm presented here, local histograms are the only image features that are computed.  That is, the decision of which label to assign to a given pixel is based purely on the distribution of color in its surrounding neighborhood.  We do this to demonstrate the validity of the concept embodied by Theorem~\ref{theorem.flatness} as an image processing tool.  For algorithms intended for real-world use, such color information should be combined with morphological data---size, local and global shape, orientation and organization---in order to obtain better classification accuracies.  An example of such an algorithm, accompanied by thorough testing and comparisons against other state-of-the-art methods, is given in the sister article~\cite{Bhagavatula10Nov} to this one; these facts are not reprinted here.

The concept of Theorem~\ref{theorem.flatness} is that the local histograms of certain textures should, on the whole, be able to be decomposed in terms of more basic distributions.  Indeed, it is reasonable to expect a local histogram computed over a region of cartilage (Figure~\ref{figure.histograms}(b)) to be a mixture of $0.8$ of a ``light purple" distribution---a distribution mostly supported in portions of $\calY$ that correspond to light purple---with $0.2$ of a darker reddish-purple one.  Meanwhile, local histograms of other tissues will correspond to distinct mixtures of other distributions.  For example, local histograms computed over a region of pseudovascular tissue (Figure~\ref{figure.histograms}(d)) might be a mixture of $0.5$ of a light pink distribution, with $0.25$ of a dark purple one and $0.25$ of a reddish-pink one.

The algorithm we present here exploits this concept.  The first step is to train our classifier.  To do so, let $K$ be the number of distinct tissue types found in a training image such as Figure~\ref{figure.classification}(a) or (d).  For any tissue type $k=1,\dotsc,K$, we compute local histograms $\set{h_{k;m}}_{m=1}^{M_k}$ about pixel locations $\set{x_{k;m}}_{m=1}^{M_k}$ that have been labeled as being of that type by medical experts.  Each $h_{k;m}$ is a nonnegatively-valued function over $\calY$ that sums to one.  There are several ways to pick the $x_{k;m}$'s.  One approach is to have the expert choose each point individually.  Alternatively, if the expert has manually segmented and labeled the entire image (Figure~\ref{figure.classification}(b)), then the $x_{k;m}$'s can be chosen at random from regions of type $k$.  The number $M_k$ of local histograms that we compute for type $k$ is somewhat arbitrary; we used repeated experimentation to find a sample size large enough to guarantee reliably-decent performance.
\begin{figure}
\begin{center}
\subfigure[]
{\includegraphics[width=.32\textwidth]{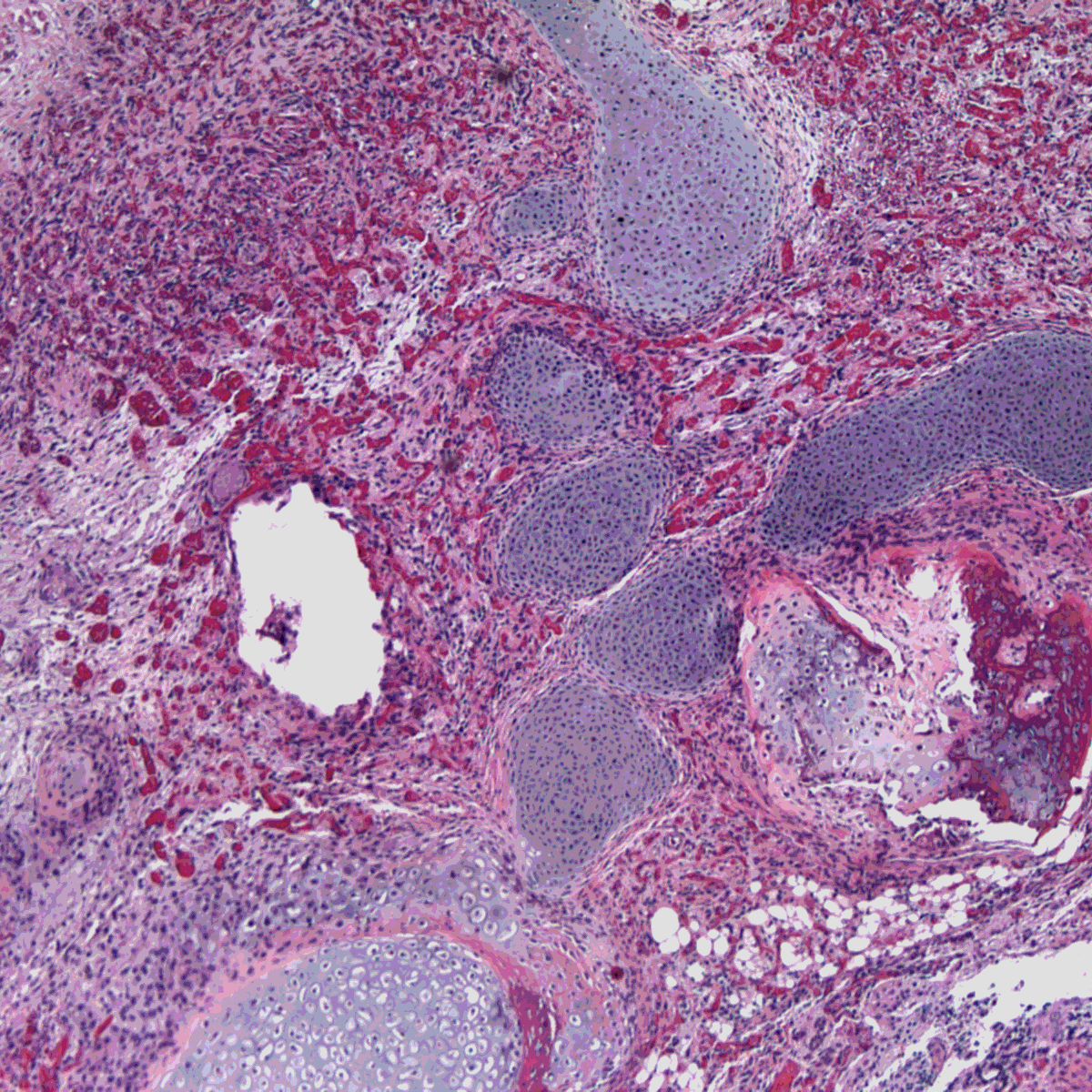}}
\hfill
\subfigure[]
{\includegraphics[width=.32\textwidth]{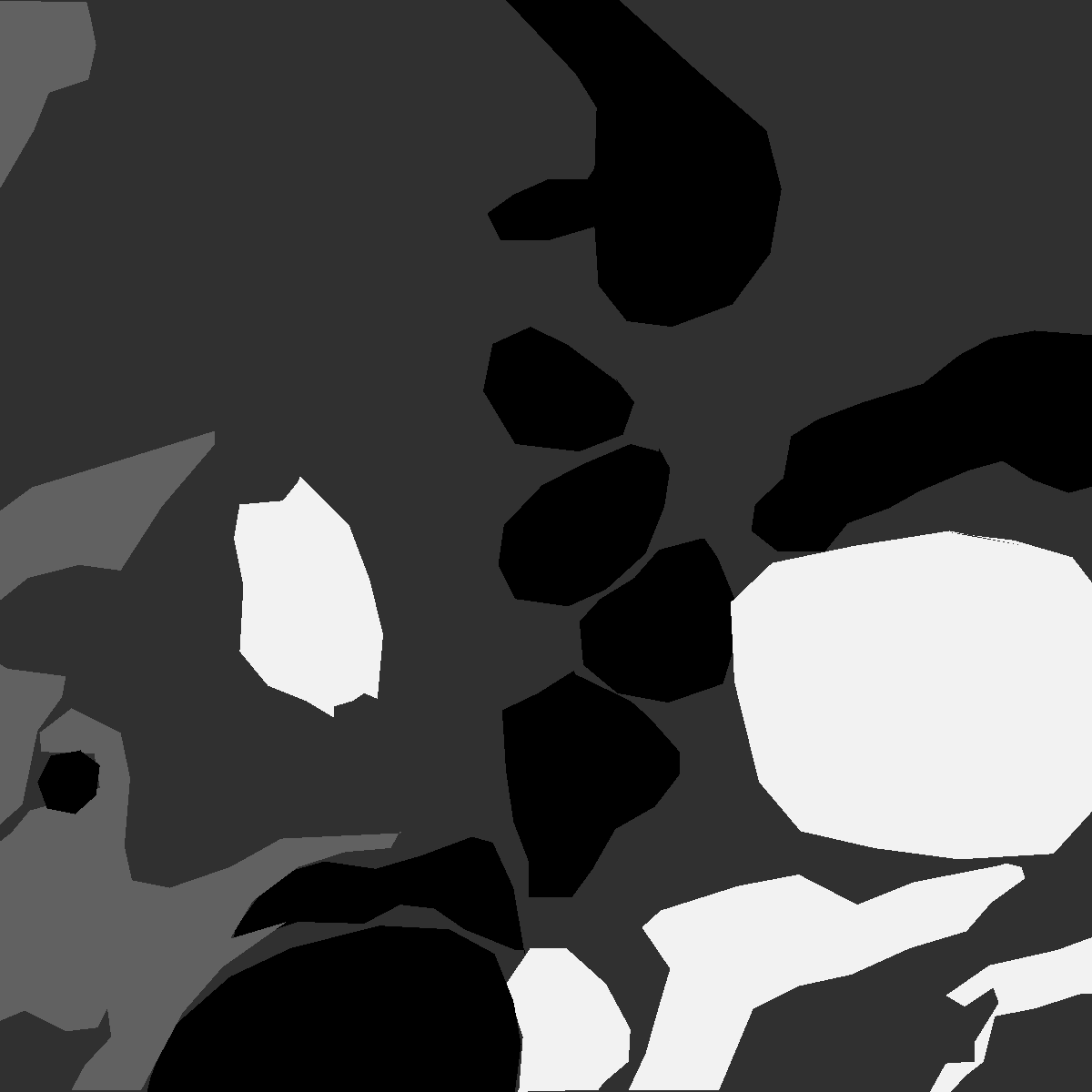}}
\hfill
\subfigure[]
{\includegraphics[width=.32\textwidth]{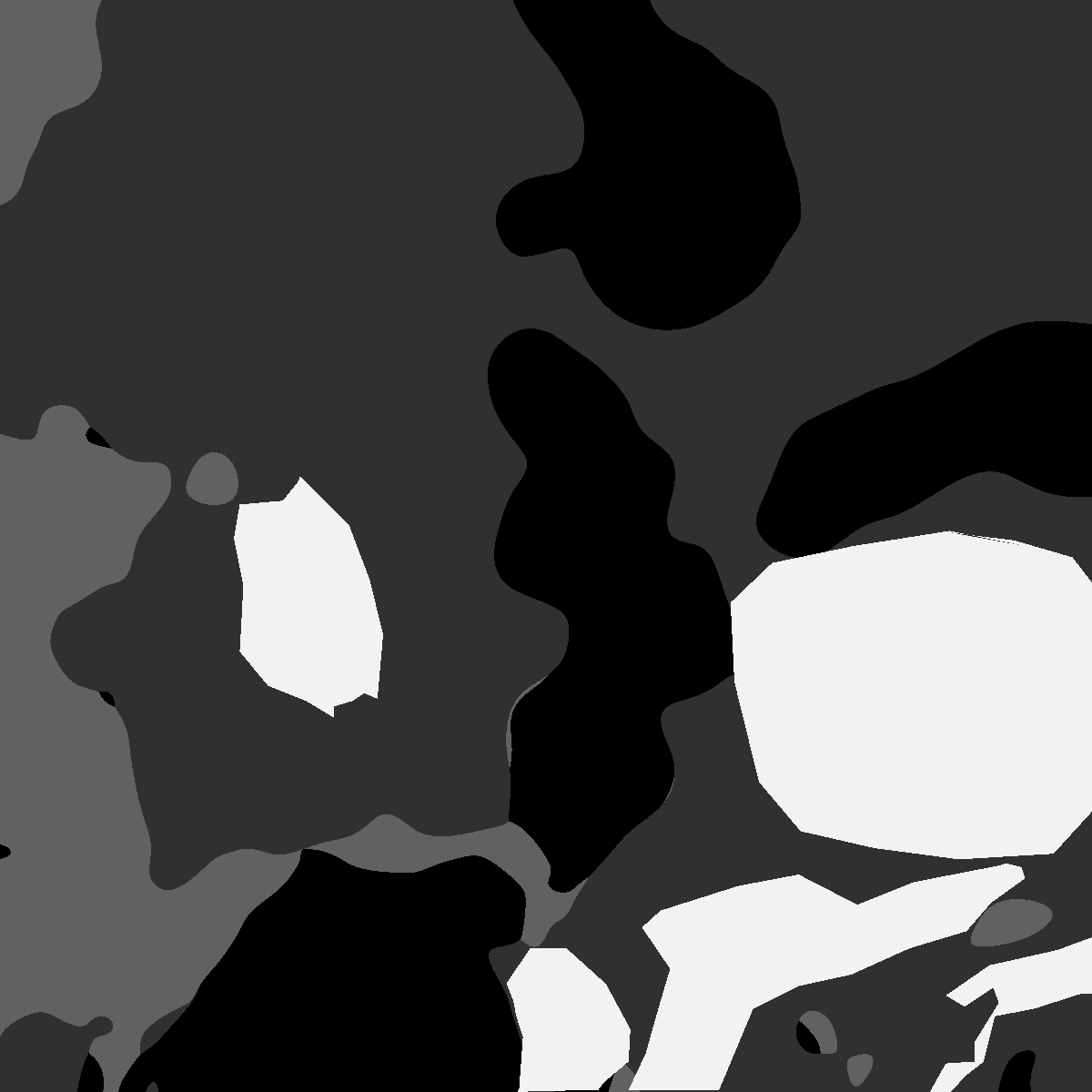}}
\\
\subfigure[]
{\includegraphics[width=.32\textwidth]{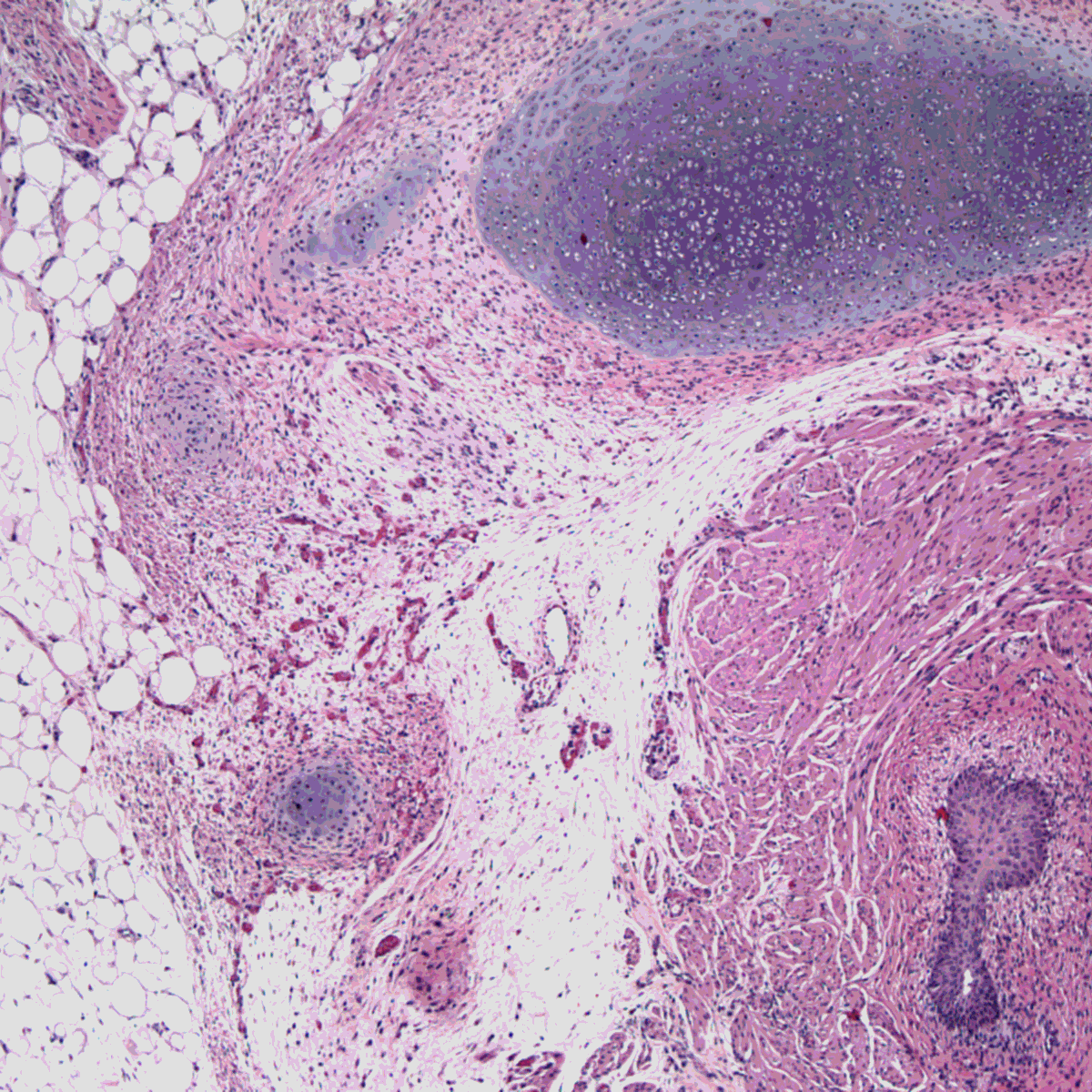}}
\hfill
\subfigure[]
{\includegraphics[width=.32\textwidth]{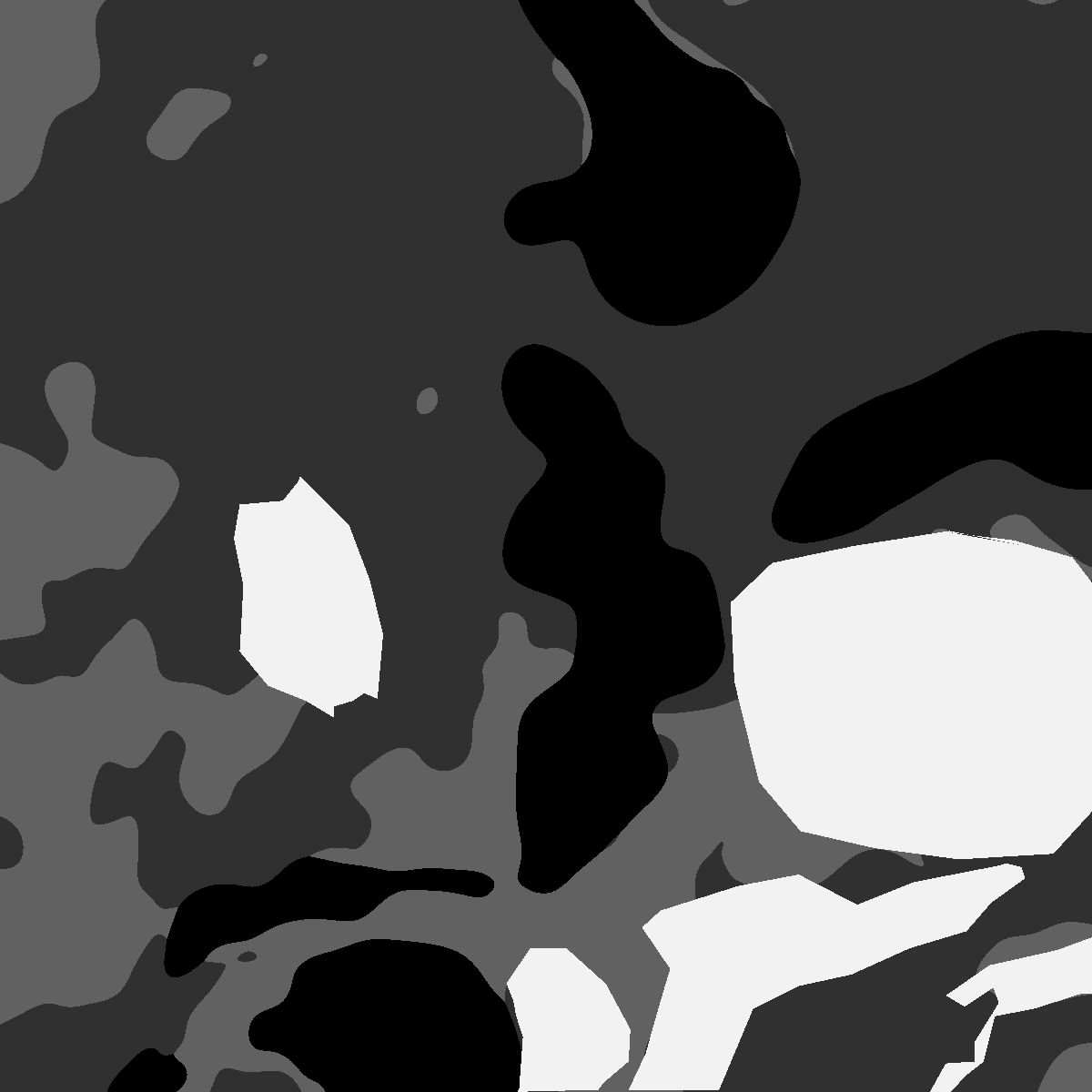}}
\end{center}
\caption{\label{figure.classification} An example of using PCA of local histograms to perform segmentation and classification of the image given in (a), which is a 3-bit quantized version of Figure~\ref{figure.Histology}(a).  A manually segmented and labeled version of (a) is shown in (b) where black represents cartilage, light gray represents connective tissue, dark gray represents pseudovascular tissue, and white represents other tissues that have been ignored in this proof-of-concept experimentation.  Using (a) as both the training and testing image in a PCA-based classification scheme~\eqref{equation.PCA decision rule}, we obtain the labels shown in (c).  A similar, but less-accurate classification of (a) can still be obtained if we instead train on (d), resulting in the labels given in (e).
}
\end{figure}

In light of Theorem~\ref{theorem.flatness}, it would be nice to demix the training local histograms $\set{h_{k;m}}_{m=1}^{M_k}$ in terms of a type-dependent class of more basic distributions \smash{$\set{g_{k;n}}_{n=1}^{N_k}$}.  That is, we would like to find nonnegatively-valued functions $g_{k;n}$ over $\calY$ that sum to one and have the property that for each training local histogram $h_{k;m}$ there exists nonnegative scalars $\set{\lambda_{k;m,n}}_{n=1}^{N_k}$ that themselves sum to one and such that:
\begin{equation}
\label{equation.demixing}
h_{k;m}\approx\sum_{n=1}^{N_k}\lambda_{k;m,n} g_{k;n},\quad\forall m=1,\dotsc,M_k.
\end{equation}
Unfortunately, computing the $g_{k;n}$'s that minimize the approximation error in~\eqref{equation.demixing} is a nontrivial optimization problem.  As such, we leave this approach for future work, and instead consider a mathematically-simpler problem in which the $\lambda_{k;m,n}$'s and $g_{k;n}$'s are permitted to be arbitrary real scalars and vectors, respectively.  That is, we perform PCA for each tissue type $k$.  To be precise, for each type, we form a $\abs{\calY}\times M_k$ matrix $H_k$ whose columns are the (vectorized) local histograms $h_{k;m}$ less their average $\overline{h}_k$: 
\begin{equation}
\label{equation.definition of covariance matrix}
H_k(:,m)=h_{k;m}-\overline{h}_k\qquad\text{where}\qquad\overline{h}_{k}=\frac1{M_k}\sum_{m=1}^{M_k}h_{k;m}.
\end{equation}
We then compute the singular value decompositions $H_k=U_k^{}\Sigma_k^{}V_k^{\mathrm{T}}$ and identify those left-singular vectors \smash{$\set{u_{k;n}}_{n=1}^{N_k}$} that correspond to some experimentally-determined number $N_k$ of dominant singular values $\set{\sigma_{k;n}}_{n=1}^{N_k}$.  In this setting, the approximation~\eqref{equation.demixing} is replaced by:
\begin{equation}
\label{equation.PCA training}
h_{k;m}\approx\overline{h}_k+\sum_{n=1}^{N_k}\ip{h_{k;m}-\overline{h}_k}{u_{k;n}}u_{k;n},\quad\forall m=1,\dotsc,M_k.
\end{equation}
The classical theory of PCA states that the approximation error in~\eqref{equation.PCA training} is optimally small in the sense that these specific $u_{k;n}$'s span the particular $N_k$-dimensional subspace of $\ell(\calY,\bbR)$ whose orthogonal projection operator $P_k$ minimizes the total squared-error $\sum_{m=1}^{M_k}\norm{h_{k;m}-\overline{h}_k-P_k(h_{k;m}-\overline{h}_k)}^2$.  The vectors $\overline{h}_k$ and \smash{$\set{u_{k;n}}_{n=1}^{N_k}$} in hand, we store them in memory, completing the training phase of our classification algorithm.

To segment and label a given image $f$, we compute its local histograms~\eqref{equation.definition of local histogram}, obtaining local distributions of color $h_x:\calY\rightarrow\bbR$, $h_x(y)=(\LH_ w f)(x,y)$ about every pixel location $x\in\calX$.  At any given $x$, we then assign a tissue label $k(x)$ by finding the tissue type $k$ whose shifted subspace $\overline{h}_k+\mathrm{span}\set{u_{k;n}}_{n=1}^{N_k}$ is nearest to $h_x$.  Specifically, we let:
\begin{align}
\nonumber
k(x)
&=\underset{k=1,\dotsc,K}{\mathrm{argmin}}\,\Bignorm{h_x-\overline{h}_k-\sum_{n=1}^{N_k}\ip{h_x-\overline{h}_k}{u_{k;n}}u_{k;n}}^2\\
\nonumber
&=\underset{k=1,\dotsc,K}{\mathrm{argmin}}\,\Bigparen{\norm{h_x-\overline{h}_k}^2-\sum_{n=1}^{N_k}\abs{\ip{h_x-\overline{h}_k}{u_{k;n}}}^2}\\
\label{equation.PCA decision rule}
&=\underset{k=1,\dotsc,K}{\mathrm{argmin}}\,\Biggparen{\,\sum_{y\in\calY}\abs{h_x(y)-\overline{h}_k(y)}^2-\sum_{n=1}^{N_k}\biggabs{\sum_{y\in\calY}\bigparen{h_x(y)-\overline{h}_k(y)}u_{k;n}(y)}^2}.
\end{align}
In implementation, we compute the summations over $\calY$ in~\eqref{equation.PCA decision rule} as running sums, looping over all $y\in\calY$.  This computational trick greatly reduces our memory requirements: at any given time, we only store a single level of $\LH_ w f$.  By Theorem~\ref{theorem.PropertiesOfLHs}, such a level can be obtained by filtering an indicator function; in the following experimental results, we avoided edge artifacts by using a weighted noncyclic method of filtering, namely the $\star$-convolution of~\cite{Srinivasa09}.  Without such a trick, one must store the entire local histogram transform in memory, a daunting task for even modestly-sized images: the full local histogram transform of the $1200\times1200$, 8-bit RGB image given in Figure~\ref{figure.Histology}(a) is a $1200\times1200\times256\times256\times256$ array.

Further computational advantages may be gained by quantizing the image and reducing the dimension of the color space.  For our particular set of histology images, we experimentally found that we could still obtain good accuracies even if we discard the green channel of our purple-pink images, and moreover quantize the $8$-bit red and blue channels down to $3$-bits apiece.  That is, we quantize $\calY$ from $\bbZ_{256}^3$ to $\bbZ_8^2$.  By Proposition~\ref{proposition.LHresults}, this is equivalent to binning the original $1200\times1200\times256\times256\times256$ local histogram array down to a new one of size $1200\times1200\times8\times8$.  The quantized version of Figure~\ref{figure.Histology}(a) is given in Figure~\ref{figure.classification}(a); for the sake of readability, a $3$-bit quanitized version of the unused green channel was included in this rendering.  As a result of this quantization, it only takes a few seconds to assign per-pixel labels to a $1200\times1200$ histology image using a MATLAB-based implementation of~\eqref{equation.PCA decision rule}, running on standard desktop hardware.  For this particular set of images, further color quantization, such as using $2$-bit colors ($\calY=\bbZ_2^2$) or converting the original image to grayscale ($\calY=\bbZ_{256}$), results in an unacceptable loss in classification accuracy, as do attempts at spatial quantization ($\calX=\bbZ_{600}^2$).  

Two runs of this classification algorithm are depicted in Figure~\ref{figure.classification}.  In the first run, we train the classifier on the $3$-bit $1200\times1200$ red-blue image given in~Figure~\ref{figure.classification}(a).  For the sake of simplicity, we restrict ourselves to $K=3$ tissue types: cartilage, connective tissue and pseudovascular tissue; all other tissue types are ignored in the confusion matrices given below.  For each type $k=1,2,3$, we randomly choose $M_k=64$ points of that type, making use of a small number of the $1200^2$ ground truth labels given in~Figure~\ref{figure.classification}(b); edge artifacts are avoided by not picking points near the border.  For each type, we then perform PCA on the $64$ local histograms $h_{k;m}$ of that type, computing an average local histogram $\overline{h}_k$ as well as the dominant left-singular vectors of $H_k$~\eqref{equation.definition of covariance matrix}.  For the sake of simplicity, in a given experiment we will use the same number of principal components for each of the three types, that is, $N_k=N$ for $k=1,2,3$.  At the same time, we experiment with this number itself, letting $N$ be either $1$, $2$, $3$ or $4$.  With the training complete, we then segment and classify Figure~\ref{figure.classification}(a) using the decision rule~\eqref{equation.PCA decision rule}, resulting in per-pixel labels such as the ones given in Figure~\ref{figure.classification}(c) for $N=4$.  Comparing Figure~\ref{figure.classification}(c) and the ground truth of Figure~\ref{figure.classification}(b), we see both the power and limitations of local histograms: color is a big factor in determining tissue type, but by ignoring shape, we suffer from oversmoothing.  The accuracy percentages for various choices of $N$ are given by a confusion matrix:
\begin{equation*}
\begin{tabular}{lrrrcrrrcrrrcrrr }
&\multicolumn{3}{c}{$N=1$}&&\multicolumn{3}{c}{$N=2$}&&\multicolumn{3}{c}{$N=3$}&&\multicolumn{3}{c}{$N=4$}\\
\hline
	&Ca &Co	&Ps	&	&Ca &Co	&Ps	&	&Ca &Co	&Ps	&	&Ca &Co	&Ps\\
\hline								
Ca  &77 &22	&1  &	&87 &11	&2	&	&96 &3	&1	&	&96 &3	&1 \\
Co  &0  &95	&5  &	&0  &91	&9	&	&3  &94	&3	&	&3  &92	&5\\
Ps  &2  &11	&87 & 	&2  &8	&90	&	&6  &7	&87	&	&5  &5	&90\\
\end{tabular}
\end{equation*}
Here each row of the matrix tells us the percentage a certain tissue was labeled as cartilage (Ca), connective tissue (Co), and pseudovascular tissue (Ps).  In particular, the first three entries of the first row of this table tell us that when using a single principal component, those points labeled as cartilage by a medical expert in Figure~\ref{figure.classification}(b) are correctly labeled as such by our algorithm $77\%$ of the time, while $22\%$ of it is mislabeled as connective tissue and $1\%$ of it is mislabeled as pseudovascular tissue.  Note here that we have trained and tested on the same image; such experiments indicate the feasibility of our approach in a semi-automated classification scheme in which a medical expert handpicks $64$ points of each given type and lets the algorithm automatically assign labels to the rest.

The second run of this algorithm is almost identical to the first, with the exception that we use a distinct image in the training phase.  To be precise, for each of the three tissue types, we perform PCA on the local histograms of $64$ randomly-chosen points of that type in Figure~\ref{figure.classification}(d), making use of its ground truth labels (not pictured).  We then apply the principal components obtained from Figure~\ref{figure.classification}(d) to generate labels (Figure~\ref{figure.classification}(e)) for Figure~\ref{figure.classification}(a) using the decision rule~\eqref{equation.PCA decision rule}.  Compared to the first run, the algorithm's performance here is a better indication of its feasibility as a fully automated classification scheme, and is summarized by the following confusion matrix:
\begin{equation*}
\begin{tabular}{lrrrcrrrcrrrcrrr }
&\multicolumn{3}{c}{$N=1$}&&\multicolumn{3}{c}{$N=2$}&&\multicolumn{3}{c}{$N=3$}&&\multicolumn{3}{c}{$N=4$}\\
\hline
	&Ca &Co	&Ps	&	&Ca &Co	&Ps	&	&Ca &Co	&Ps	&	&Ca &Co	&Ps\\
\hline								
Ca  &90 &9	&1  &	&91 &4	&5	&	&90 &5	&5	&	&83 &11	&6\\
Co  &25 &61	&14 &	&10 &62	&28	&	&7  &79	&14	&	&8  &70	&22\\
Ps  &30 &12	&58 & 	&4  &10	&86	&	&4  &50	&46	&	&2  &17	&81\\
\end{tabular}
\end{equation*}
Though the performance in the second run is understandably worse than that of the first, it nevertheless demonstrates the real-world potential of the idea exemplified by Theorem~\ref{theorem.flatness}: the local histograms of certain types of textures can be decomposed into more basic distributions, and this decomposition can serve as an image processing tool.

\section*{Acknowledgments}
The authors are extremely grateful to Dr.~Carlos Castro and Dr.~John A.~Ozolek for introducing us to the motivating application and for providing us with raw image data and manually segmented ground truth labels used throughout this article.  This work is supported by NSF DMS 1042701 and CCF 1017278, AFOSR F1ATA01103J001 and F1ATA00183G003NIH, NIH-R03-EB009875 and 5P01HD047675-02, the PA State Tobacco Settlement and the Kamlet-Smith Bioinformatics Grant.  Parts of the work were presented at ISBI 2010~\cite{Bhagavatula10} and SBEC 2010~\cite{Massar10}.  The views expressed in this article are those of the authors and do not reflect the official policy or position of the United States Air Force, Department of Defense, or the U.S.~Government.

\end{document}